\tikzstyle{dmatrix}=[matrix of math nodes,row sep=2.5em, column sep=2.5em,
\numberwithin{equation}{section}
\theoremstyle{plain}
\newtheorem{theorem}{Theorem}[section]
\newtheorem{lemma}[theorem]{Lemma}
\newtheorem{cor}[theorem]{Corollary}
\newtheorem{prop/Def}[theorem]{Propsition/Definition}
\newtheorem{theorem/Def}[theorem]{Theorem/Definition}
\theoremstyle{definition}
\newtheorem{Def}[theorem]{Definition}
\newtheorem{rem}[theorem]{Remark}
\newtheorem{exa}[theorem]{Example}
\def \Q {{\mathbb Q}}
\def \R {{\mathbb R}}
\def \N {{\mathbb N}}
\def \Z {{\mathbb Z}}
\def \D {{\pmb{D}}}
\def \X {{ \mathfrak{X}}}
\def \T {{\mathbb T}}
\def \div {{\operatorname{div}}}
\def \vol {{ \operatorname{vol}}}
\def \max {{ \operatorname{max}}}
\def \MV {{ \operatorname{MV}}}
\def \Div {{ \operatorname{Div}}}
\def \sm {{\operatorname{sm}}}
\def \mult {{\operatorname{mult}}}
\def \tor {{\operatorname{tor}}}
\title{The convex-set algebra and the toric b-Chow group}
\author{Ana Mar\'ia Botero}
\date{}
\begin{document}

\maketitle

{\small{\begin{abstract}
In \cite{botero}, a top intersection product of toric b-divisors on a smooth complete toric variety is defined. It is shown that a nef toric b-divisor corresponds to a convex set and that its top intersection number equals the volume of this convex set. The goal of this article is to extend this result and define an intersection product of sufficiently positive toric b-classes of arbitrary codimension. For this, we extend the polytope algebra of McMullen (\cite{McM1}, \cite{McM2}, \cite{McM3}) to the so called \emph{convex-set algebra} and we show that it embeds in the toric b-Chow group. In this way, the convex-set algebra can be viewed as a ring for an intersection theory for sufficiently positive toric b-classes. As an application, we show that some Hodge type inequalities are satisfied for the convex-set algebra.
\end{abstract}}}
\tableofcontents
\section{Introduction}
Let $X$ be a smooth, complete variety over an algebraically closed field $k$. Consider the set $R(X)$ of all pairs 
 $\left\{\left(X_{\pi}, \pi\right)\right\}$, where $X_{\pi}$ is a smooth complete variety and $\pi \colon X_{\pi} \to X$ is a proper birational morphism. We endow this set with a partial order by setting $\pi' \geq \pi$ if and only if there exists a proper birational morphism (which is necessarily unique) $\mu \colon X_{\pi'} \to X_{\pi}$ such that $\pi' = \pi \circ \mu$. The \emph{Riemann--Zariski Space} of $X$ is defined as the projective limit 
\[
\X \coloneqq \varprojlim_{\left(X_{\pi}, \pi\right) \in R(X)} X_{\pi},
\]
taken in the category of locally ringed topological spaces, with maps given by the $\mu$'s. See \cite{ZS} and \cite{VA} for a more detailed discussion on the structure of this space, which is introduced here only for illustrative purposes.

Let $A^*(X)_{\Q} = \bigoplus_{\ell} A^{\ell}(X)_{\Q}$ be the Chow ring of $X$, with rational coefficients. 
 Given $\pi' \geq \pi$, we have an induced push-forward map 
\[
\mu_* \colon A^*\left(X_{\pi'}\right)_{\Q} \longrightarrow A^*\left(X_{\pi}\right)_{\Q},
\]
which is just a group homomorphism. 
The \emph{b-Chow group of $\X$} is then defined as the inverse limit 
\[
A^*(\X)_{\Q} \coloneqq \varprojlim_{\left(X_{\pi}, \pi\right) \in R(X)} A^*\left(X_{\pi}\right)_{\Q}
\]
in the category of groups, with maps given by the push-forward maps. An element in the b-Chow group is called a \emph{b-class}. We can think of a b-class as a tuple of classes
\[
\left(c_{\pi}\right)_{(X_{\pi},\pi) \in R(X)}, \quad c_{\pi} \in A^*(X_{\pi})_{\Q},
\]
compatible with the push-forward map, i.e. such that $\mu_*c_{\pi'} = c_{\pi}$ whenever $\pi' \geq \pi$. 

Although in another context and for different purposes, b-Chow groups of Riemann--Zariski spaces appear also in the work of \cite{Al}.

Note that natural constructions of objects in algebraic geometry often give rise to b-classes. For example, the divisor $\div(\varphi)$ of a rational function $\varphi$ on $X$, as well as the divisor $\div(\omega)$ of a rational differential $\omega$ on $X$ make sense as b-divisors. Indeed, their multiplicities $\mult_E\varphi$ and $\mult_E\omega$ along any prime divisor $E$ on any birational model of $X$ are well defined. Also, the Todd class $\operatorname{Td}(X)$ of $X$ is naturally a b-class. This follows from the Grothendieck--Riemann--Roch Theorem. Indeed, given a proper morphsim $f \colon Y \to X$ with Y smooth and complete, and given any coherent sheaf $\mathcal{F}$ on $Y$, the Grothendieck--Riemann--Roch Theorem states that 
\[
f_*\left(ch(\mathcal{F}) \cdot \operatorname{Td}(Y)\right) = ch\left(f_!(\mathcal{F})\right) \cdot \operatorname{Td}(X)
\]
on $A^*(X)_{\Q}$. Here, $ch(\cdot)$ denotes the Chern character and $f_!$ the \emph{generalized Gysin map}, defined as an alternating sum of higher direct images \cite[Theorem 5.2]{Ful2}. Consider $\mathcal{F} = \mathcal{O}_Y$ and $f$ birational. Then, using that $ch(\mathcal{O}_Y) = 1$ and that higher direct images of the structure sheaf vanish, we see that
\[
f_*(\operatorname{Td}(Y))= \operatorname{Td}(X).
\]
Thus, the b-Todd class of $\X$ defined by
\[
\operatorname{Td}(\X) \coloneqq \left(\operatorname{Td}(X_{\pi})\right)_{(X_{\pi}, \pi) \in R(X)}
\]
 is well-defined.

Clearly, the b-version of an algebraic object carries more information than the original object. Hence, it makes sense to try to define an intersection theory on $A^*(\X)_{\Q}$ as this would provide much more refined birational invariants. However, this is in general not possible, as can already be seen in the toric case for top intersection products \cite[Example~4.10]{botero}. However, we may ask if there is a subset of the b-Chow group, defined by a meaningful positivity condition, on which an intersection product is well-defined. The main purpose of this article is to show that in the toric case, this is possible. 

In order to state our main result, we restrict now to the toric case and recall some definitions and results. 

Let $X_{\Sigma}$ be a smooth and complete toric variety over $k$, defined by a smooth and complete fan $\Sigma \subset M_{\R}$. Here, $M$ denotes the character lattice of the algebraic torus acting on $X_{\Sigma}$. Then, the set $R(X_{\Sigma}) = R(\Sigma)$ consisting of all \emph{toric} proper birational morphisms to $X$ corresponds to the set of smooth complete fans refining $\Sigma$. Since toric resolution of singularities is given (see e.g. \cite{ful}), the set $R(\Sigma)$ carries the structure of a directed set. In \cite{botero}, the author developed a top intersection theory of toric $b$-divisors on $X_{\Sigma}$.
 The top intersection number of a toric b-divisor is defined as a limit of top intersection numbers as we vary $\Sigma' \in R(\Sigma)$. It is shown that if the toric b-divisor $\D$ is nef, then it corresponds to a compact convex set $K_{\D}$. Moreover, its top intersection number exists, is finite and equals the volume of $K_{\D}$ (see Section \ref{sec:b-divisors}).

The aim of this article is to extend this top intersection product of nef toric b-divisors to an intersection product of sufficiently positive toric b-classes of arbitrary codimension. To this end, we extend the polytope algebra $\Pi$ of McMullen (\cite{McM1}, \cite{McM2}, \cite{McM3}) to the so called \emph{convex-set algebra} $\mathcal{C}$, generated by classes $[K]$ of some compact convex sets $K$ in $M_{\R}$, modulo some relations (see Definition~\ref{def:convex-set}). This a $\Q$-graded algebra which contains $\Pi$ as a $\Q$-graded subalgebra. 

Now, Fulton and Sturmfels show in \cite[Theorem 4.2]{FS}, that there is an isomorphism of $\Q$-graded algebras
\begin{align}\label{eq:iso-pol}
f\colon \Pi \longrightarrow \varinjlim_{\Sigma' \in R(\Sigma)} A^*\left(X_{\Sigma'}\right)_{\Q},
\end{align}
where the direct limit is taken with respect to the pull-back map. Motivated by this result, we define the toric b-Chow group
\[
A^*\left(\mathfrak{X}_{\Sigma}^{\tor}\right)_{\Q} \coloneqq \varprojlim_{\Sigma' \in R(\Sigma)} A^*\left(X_{\pi}\right)_{\Q},
\]
where the inverse limit is taken with respect to the push-forward map. We show the following theorem which is a combination of Theorem \ref{th:injection} and Corollary \ref{cor:inj} in the text.
\begin{theorem}\label{th:intro}
There is a subgroup $S \subset A^*\left(\mathfrak{X}_{\Sigma}^{\tor}\right)_{\Q}$ endowed with an algebra structure, and an isomorphism of $\Q$-graded algebras 
\begin{align}
\iota \colon \mathcal{C} \longrightarrow S,
\end{align}
satisfying that 
\[
\log\left([K_{\D}]\right) \longmapsto \D,
\]
for a nef toric b-divisor $\D$ on $X_{\Sigma}$ with corresponding convex set $K_{\D}$. Moreover, the restriction to the polytope algebra coincides with the isomorphism from \ref{eq:iso-pol}.
\end{theorem}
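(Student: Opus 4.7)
The plan is to deduce $\iota$ from the Fulton--Sturmfels isomorphism $f$ of \ref{eq:iso-pol} in two stages. First, I would turn the direct-limit description of $f$ (which records classes on fine models via pullback) into a compatible inverse-limit description (which records classes on every model via pushforward), obtaining a map $\iota|_{\Pi}\colon \Pi \to A^{*}(\mathfrak{X}_{\Sigma}^{\tor})_{\Q}$. Second, I would extend $\iota|_{\Pi}$ to the convex-set algebra $\mathcal{C}$ by polytope approximation, relying on the nef toric b-divisor / compact convex set correspondence from \cite{botero} to control the limit.

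For the first stage, fix a polytope $P$ and $\Sigma'\in R(\Sigma)$. Choose any smooth refinement $\Sigma''\geq \Sigma'$ on which the normal fan of $P$ is subordinate, so that $P$ defines a nef toric Cartier divisor $D_{P}$ on $X_{\Sigma''}$, and set
\[
\iota([P])_{\Sigma'}\coloneqq \mu_{*}\,f([P])_{\Sigma''},
\]
where $\mu\colon X_{\Sigma''}\to X_{\Sigma'}$ is the induced morphism. Independence of the auxiliary $\Sigma''$ is a standard consequence of the pullback compatibility of $f([P])$ passed to a further common refinement together with $\mu_{*}\mu^{*}=\mathrm{id}$ for smooth proper birational morphisms; compatibility of the family $(\iota([P])_{\Sigma'})_{\Sigma'}$ under subsequent pushforwards is functoriality. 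A $\Q$-linear extension then gives $\iota|_{\Pi}\colon \Pi \to A^{*}(\mathfrak{X}_{\Sigma}^{\tor})_{\Q}$, and on any $\Sigma'$ large enough to see $P$ the component agrees with $f([P])$.

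For the second stage, I would approximate a compact convex set $K$ by a sequence of polytopes $P_{n}\to K$ in the Hausdorff metric and define
\[
\iota([K])_{\Sigma'} \coloneqq \lim_{n\to\infty}\iota([P_{n}])_{\Sigma'}
\]
on each model $\Sigma'$. Convergence and well-definedness of this limit rely on the mixed-volume-type continuity already exploited in \cite{botero} to identify the top intersection number of a nef toric b-divisor with the volume of its convex set, here upgraded from numbers to classes by the polynomial dependence of intersection products on divisor coefficients. Setting $S\coloneqq \iota(\mathcal{C})$, the algebra structure on $S$ is transported from $\mathcal{C}$ along $\iota$; the nontrivial point is that this transported product is actually intrinsic, because on any $\Sigma'$ sufficiently refined to see two given classes as pullbacks from polytope data it coincides with the ordinary intersection product on $A^{*}(X_{\Sigma'})_{\Q}$. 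The identity $[P_{1}]\cdot [P_{2}]=[P_{1}+P_{2}]$ in $\Pi$ together with $D_{P_{1}+P_{2}}=D_{P_{1}}+D_{P_{2}}$ ensures this on polytopes and propagates to $\mathcal{C}$ by continuity.

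The hardest step will be the second stage: showing that the approximation-based definition of $\iota([K])$ depends only on $[K]\in \mathcal{C}$ (not on the chosen sequence), and that the resulting product is well-defined as an honest structure on the subgroup $S$, given that pushforward is not a ring homomorphism and the ambient inverse limit $A^{*}(\mathfrak{X}_{\Sigma}^{\tor})_{\Q}$ carries no natural ring structure. Both issues reduce to analyzing the continuity, on each fixed smooth model $\Sigma'$, of the Fulton--Sturmfels construction, controlled by the positivity hypotheses built into $\mathcal{C}$ and by the correspondence of \cite{botero}. Injectivity of $\iota$ follows from injectivity of $f$ on $\Pi$ combined with the fact that $K$ is recovered from the mixed intersection data of $\iota([K])$ via the volume/mixed-volume formulas. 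Finally, for the identity $\log([K_{\D}])\mapsto \D$: on a polytope $P$ Fulton--Sturmfels gives $f(\log[P])=[D_{P}]$, hence $\iota(\log[P])$ is the b-divisor class attached to $P$; approximating $K_{\D}$ by polytopes $P_{n}$ and using continuity of $\log$ in the graded algebra (where on each graded piece it is a finite sum) then matches $\iota(\log[K_{\D}])$ with the b-divisor $\D$ reconstructed as the compatible limit of the divisors $D_{P_{n}}$.
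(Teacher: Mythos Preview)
Your first stage, converting the direct-limit description of $f$ into an inverse-limit one by pushing forward from a sufficiently fine model, is exactly how the paper handles polytope classes (this is Corollary~\ref{cor:commut} together with the restriction of $\gamma$ to $\Pi$ in Section~\ref{sec:emb}).

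The second stage is where your proposal diverges from the paper, and where the gap lies. The paper does \emph{not} extend the map to $\mathcal{C}$ by Hausdorff approximation. Instead it uses the b-divisor correspondence of Remark~\ref{rem:convex-b} directly: for fixed $P\in\mathcal{P}_{\sm}$ one picks $P_1\geq P$ on which the incarnation $D_{K,\Sigma_{P_1}}$ of the nef b-divisor $\D_K$ is already nef; the associated rational polytope $P_{D_{K,\Sigma_{P_1}}}$ lies in $\Pi(P_1)$, and one sets $\gamma_P([K])\coloneqq g_1\bigl([P_{D_{K,\Sigma_{P_1}}}]\bigr)$ via the combinatorial pushforward $g_1\colon\Pi(P_1)\to\Pi(P)$. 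No analytic limit is taken; well-definedness is argued purely from the functoriality of the $g$-maps (Theorem~\ref{th:comb-push}(4)). Compatibility in $P$ then yields $\gamma$ by the universal property, and injectivity is read off from the degree-one part, which reconstructs $\D_K$ and hence $K$.

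Your limit $\lim_n\iota([P_n])_{\Sigma'}$ must land in $A^*(X_{\Sigma'})_\Q$, a finite-dimensional $\Q$-vector space with no complete topology, and ``polynomial dependence on divisor coefficients'' only furnishes continuity after extending scalars to $\R$; there is no mechanism forcing the limit back into $\Q$-coefficients. Worse, the $P_n$ live on ever finer models, so on a fixed $\Sigma'$ you are pushing forward $\exp(D_{P_n})$ from a varying source, and the higher-degree pieces of $\pi_*\exp(D_{P_n})$ are not functions of $\pi_* D_{P_n}$ alone---so the continuity heuristic does not control them. Your side claim that the transported product on $S$ is ``intrinsic'' (agreeing with the Chow product on a fine enough model) also overshoots: this holds for classes coming from $\Pi$, but for genuinely non-polytopal $K$ there is no model on which $\iota([K])$ is a pullback, and the paper makes no such assertion---it simply defines the algebra structure on $S=\iota(\mathcal{C})$ by transport along the injection.
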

It follows that the convex-set algebra can be viewed as a ring for an intersection theory of sufficiently positive toric b-classes on the toric b-Chow group. Moreover, by Lemma~\ref{lem:volume}, in the case of nef toric b-divisors, the top intersection product on $\iota(\mathcal{C})$ induced by the multiplication on $\mathcal{C}$ coincides with the top intersection product defined in \cite{botero}.

As an application, in Corollary \ref{cor:inj2}, we show a structure result for the convex-set algebra $\mathcal{C}$ extending the structure result for the polytope algebra from \cite{Brion}. 

Finally, we show that some Hodge type inequalities are satisfied for $\mathcal{C}$.

In order to contextualize these results, recall from \cite[Sections 5 and 6]{botero} that given any toric b-divisor $\D$ on a toric variety $X$ (in fact you can do this for any b-divisor), one can consider its space of global sections $H^0(X,\D)$. This is a finite-dimensional vector space of rational functions. The graded algebra $\bigoplus_{k \geq 0}H^0(X, k\D)t^k$ is however not finitely generated. But it has the property of being of \emph{almost integral type} and thus, by \cite{KK}, one can associated a convex Okounkov body to it. It turns out that if $\D$ is nef and big, then this Okounkov body is related to the convex body $K_{\D}$ described above \cite[Theorem 6.16]{botero}. Now, if $\D$ is b-Cartier then its space of global sections is finite-dimensional and (at least in the toric case), its corresponding Okounkov body is a polytope. As  was observed in \cite{KK-ca} and \cite{KK2}, in this b-Cartier case, top intersection numbers correspond to mixed multiplicity indexes of finite dimensional spaces of rational functions which in turn correspond to mixed volumes of convex Okounkov bodies. In this way, the intersection theory of sufficiently positive (not necessarily Cartier) toric b-divisors developed in this paper can be seen as a generalization of this top intersection theory of finite-dimensional spaces of rational functions and their corresponding Okounkov bodies.  

After posting these results in the arXiv, we learned that Dang and Favre, independently of our preprint, have come up with an intersection theory of nef b-divisors on projective varieties defined over a countable field. They treat the toric case specifically, however without making use of any convex-set algebra. The relation to this theory will be studied more deeply in subsequent papers. 

The structure of this article is as follows. In Section \ref{sec:conv-set} we give the definition of the convex-set algebra $\mathcal{C}$ and state some of its properties. In Section \ref{sec:b-divisors} we recall the main definitions and integrability properties of toric b-divisors from \cite{botero}. In Section \ref{sec:chow} we define the toric b-Chow group of the toric Riemann--Zariski space and we recall the definition of the isomorphism \eqref{eq:iso-pol}. In Section \ref{sec:ppf} we relate the polytope algebra with spaces of rational piecewise polynomial functions on $N_{\R} = M_{\R}^{\vee}$. Our main theorem is then shown in Section \ref{sec:emb}. Finally, in Section~\ref{sec:applications}, we show that some Hodge type inequalities are satisfied for the convex set algebra $\mathcal{C}$.

\medskip \noindent \textbf {Acknowledgments.} We are grateful to
  the anonymous referee for all her/his constructive remarks.

%

\section{Definitions and notations}\label{sec:not}
Let $k$ be an algebraically closed field of arbitrary characteristic. Let $\mathbb{T} \simeq \mathbb{G}_m^n$ be an $n$-dimensional algebraic torus over $k$. We denote by $M$ the $n$-dimensional character lattice of $\mathbb{T}$ and by $N = M^{\vee}$ its dual lattice. For any ring $R$ we denote by $M_R$, $N_R$ the tensor product $M \otimes_{\Z} R$, resp. $N \otimes_{\Z} R$. 

\begin{Def}
We define the sets $W$ and $W_{\sm}$ by 
\[
W \coloneqq \left\{\Sigma \; \big{|} \; \Sigma \text{ is a complete, rational (generalized) fan in }N_{\R} \right\}
\]
and 
\[
W_{\sm} \coloneqq \left\{\Sigma \; \big{|} \; \Sigma \text{ is a smooth, complete, rational (generalized) fan in }N_{\R} \right\},
\]
respectively. Here, \emph{generalized} means in the sense of \cite[Section 6.2]{CLS}, i.e. the cones are not necessarily required to be strictly convex. A generalized fan that
is an ordinary fan is called non-degenerate; otherwise it is degenerate. We endow $W$ with the partial order given by 
\[
\Sigma'' \geq \Sigma' \quad \text{iff} \quad \Sigma'' \text{ is a refinement of }\Sigma'.
\]

Since for every pair $\Sigma'$, $\Sigma'' \in W$ we can always find a rational refinement $\Sigma''' \geq \Sigma'$, $\Sigma''' \geq \Sigma''$, the set $W$ carries a structure of a directed set.

We set 
\[
W' \coloneqq \left\{ \Sigma \in W \; \big{|} \; \Sigma \text{ is non- degenerate} \right\}\subset W
\]
 with its induced directed set structure. 
 
 Similarly, we endow $W_{\sm}$ with a driected set structure and define
 \[
W'_{\sm} \coloneqq \left\{ \Sigma \in W_{\sm} \; \big{|} \; \Sigma \text{ is non- degenerate} \right\}\subset W_{\sm}
\]
with its induced directed set structure.
\end{Def}
\begin{rem}\label{rem:cofinal}
Since any fan $\Sigma \in W$ has a smooth refinement, the sets $W_{\sm}$ and $W'_{\sm}$ are cofinal in $W$ and $W'$, respectively. 
\end{rem} 

Given a rational polytope $P \subset M_{\R}$, we denote by $\Sigma_P \in W$ its normal fan. 

\begin{rem}\label{rem:proj-fan}
A fan $\Sigma \in W$ is said to be \emph{projective} if it is the normal fan of some polytope. It follows from the toric Chow lemma  \cite[Theorem 6.1.19]{CLS} that the set of projective fans in $W$ form a cofinal subset.
\end{rem}

Throughout this article, we assume familiarity with toric geometry. For a detailed introduction to this rich subject, we refer to \cite{CLS}. 
 
For any fan $\Sigma \in W$ we denote by $X_{\Sigma}$ the corresponding complete toric variety. Note that since we are taking generalized fans, the resulting toric varieties are not necessarily full-dimensional. Also, recall that if $\Sigma$ is smooth, then $X_{\Sigma}$ is a smooth toric variety. Given any toric Cartier divisor $D$ on $X_{\Sigma}$ we denote by $h_D \colon |\Sigma| \to \R$ its corresponding suport function. 

Finally, for a smooth algebraic variety $X$ over $k$ we let $A^*(X)_{\Q} = \bigoplus_{\ell} A^{\ell}(X)_{\Q}$ be its Chow ring with rational coefficients. This carries a natural graded algebra structure.


\section{The convex-set algebra}\label{sec:conv-set}
Let notations be as in Section \ref{sec:not}. We give the definition of the convex-set algebra $\mathcal{C}$ and state some of its properties. 

Recall that a non-empty subset $K \subset M_{\R}$ is \emph{convex} if for each pair of points $m_1, m_1 \in K$, the line segment
\[
[m_1, m_2] = \left\{tm_1 + (1-t)m_2 \, \big{|} \, 0 \leq t \leq 1 \right\}
\]
is contained in $K$. Examples of convex sets are cones and polyhedra. 

We now recall some definitions and properties of convex sets. For more a more detailed introduction to convex geometry we refer to \cite{ROCK}.
\begin{Def}
 The \emph{support function} of a convex set $K \subset M_{\R}$ is the function 
\[
h_K \colon N_{\R} \longrightarrow \underline{\R} \; \left(\coloneqq \R \cup \{-\infty\}\right)
\]
given by the assignment
\[
 v \longmapsto\inf_{m \in K}\langle m,v\rangle.
\] 
\end{Def}
Support functions of convex sets are concave, upper semi-continuous and conical, i.e. they satisfy $h_K(\lambda v) = \lambda h_K(v)$ for any non-negative real number $\lambda$.

Conversely, given a concave, upper semi-continuous and conical function $f \colon N_{\R} \to \underline{\R}$, one defines the \emph{stability set} $K_f \subset M_{\R}$ by 
\[
K_f \coloneqq \left\{x \in M_{\R} \; \big{|} \; \langle x,u \rangle - f(u) \; \text{ is bounded below } \forall u \in N_{\R} \right\}.
\]
This is a compact convex set. 

We have 
\[
h_{K_f} = f \quad \text{ and } \quad K_{h_K} = \overline{K},
\]
hence these operations give a bijection between compact convex sets in $M_{\R}$ and concave, upper semi-continuous and conical functions on $N_{\R}$. 
\begin{Def}\label{def:rational_convex}
A convex set $K \subset M_{\R}$ is said to be \emph{rational} if its support function $h_K$ satisfies $h_K\left(N_{\Q}\right) \subset \Q$.

\end{Def}
Note that if $K = P$ is a polytope, then being rational means that its vertices have rational coordinates.
\begin{exa}
Consider the function $h \colon \R^2 \to \R$ defined by 
\[
h(x,y) = \begin{cases} \frac{xy}{x+y}, \; \text{ if }x,y \geq 0 \text{ and }x+y > 0, \\
\min(x,y), \; \text{ otherwise.} \end{cases}
\]
Then $h$ is a concave, conical function. Its corresponding rational compact convex set is 
\[
K_h = \left\{(x,y) \in \left(\R^2\right)^{\vee} \; \big{|} \; x,y \geq 0 \; , \; x+y \leq 1 \; , \; \sqrt{x} + \sqrt{y} \geq 1 \right\}.
\]
\end{exa}

Recall that the normal fan of a rational polytope $P \subset M_{\R}$ is a fan in $W$ denoted by $\Sigma_P$. We define the following two sets.
\begin{Def}
Let \[
\mathcal{K} \coloneqq \left\{ K \subset M_{\R} \; \big{|} \; K \text{ rational compact convex set}\right\} \cup \{\emptyset\}
\] \
and 
\[
\mathcal{P} \coloneqq \left\{ P  \subset M_{\R} \; \big{|} \; P \text{ rational polytope} \right\} \cup \{\emptyset\}.
\]
Clearly, we have the inclusion $\mathcal{P} \subset \mathcal{K}$. 
We say that 
\[
P'' \geq P' \text{ in } \mathcal{P}\quad \text{iff} \quad \Sigma_{P''} \text{ is a refinement of }\Sigma_{P'} \text{ in }W.
\]
 Modulo rational translations, this is a partial order and gives $\mathcal{P}$ the structure of a directed set. 
 \end{Def}
 The following is the definition of McMullen's polytope algebra \cite{Brion}, \cite{FS}, \cite{McM2}.
\begin{Def}
 The polytope algebra $\Pi$ is the $\Q$-algebra with a generator $[P]$ for every polytope $P \in \mathcal{P}$ and $[\emptyset] = 0$, subject to the following relations:
 \begin{itemize}
 \item $[P_1 \cup P_2] + [P_1 \cap P_2] = [P_1] + [P_2]$ whenever $P_1 \cup P_2 \in \mathcal{P}$.
 \item $[P + t] = [P]$ for all translations $t \in \Q^n$.
 \item $[P_1] \cdot [P_2] = [P_1 + P_2]$,
 where $P_1 + P_2$ denotes the Minkowski sum of convex sets. 
 \end{itemize}
\end{Def}

Before defining the convex-set algebra $\mathcal{C}$, we make $\mathcal{K}$ into a metric space by endowing it with the Hausdorff metric \cite[Section 1.8]{BM}. This is defined by 
\[
d_H(K,L) \coloneqq \max\left\{\sup_{x \in K}\inf_{y \in L} |x-y|, \sup_{x \in L}\inf_{y \in K}|x-y|\right\},
\]
for $K,L$ in $\mathcal{K}$. Here, $|\cdot |$ denotes the Euclidean metric on $M_{\R}$ (after choosing an isomorphism $M_{\R} \simeq \R^n$). 

Note that $\mathcal{K}$ is closed under rational translations, finite intersections, finite unions (if convex) and Minkowski sums. 

 We are now ready to extend the polytope algebra $\Pi$ to the convex-set algebra $\mathcal{C}$. 
 \begin{Def}\label{def:convex-set}
 The convex-set algebra $\mathcal{C}$ is the $\Q$-algebra with a generator $[K]$ for every compact convex set $K \in \mathcal{K}$ and $[\emptyset] = 0$, subject to the following relations:
 \begin{itemize}
 \item $[K_1 \cup K_2] + [K_1 \cap K_2] = [K_1] + [K_2]$ whenever $K_1 \cup K_2 \in \mathcal{K}$.
 \item $[K + t] = [K]$ for all translations $t \in \Q^n$.
 \item $[K_1] \cdot [K_2] = [K_1 + K_2]$,
 where $K_1 + K_2$ denotes the Minkowski sum of convex sets. 
 \end{itemize}
\end{Def}

We clearly have 
\[
\Pi \subset \mathcal{C}
\]
as $\Q$-algebras. We now list some properties of $\mathcal{C}$ which extend known properties of $\Pi$.
\begin{enumerate}
\item The multiplicative unit is the class of a point $1 = \left[\{0\}\right]$.

\item For any $K \in \mathcal{K}$, we have 
\begin{align}\label{eq:nilpotent}
\left([K]-1\right)^{n+1} = 0.
\end{align}
Indeed, in the polytopal case, this is \cite[Lemma 13]{McM1}. To see the non-polytopal case, consider the Hausdorff metric $d_H$ on $\mathcal{K}$ as defined above. By \cite[Theorem 2.4.15]{BM}, given a compact convex set $K \in \mathcal{K}$, there exists a sequence of polytopes $(P_i)_{i \in \N}$ in $\mathcal{P}$ converging to $K$ with respect to $d_H$. Moreover, by \cite[Section 3, pg. 139]{BM}, the Minkowski addition of elements in $\mathcal{K}$ is continuous with respect to $d_H$. Also, rational translations are continuous with respect to this metric. Hence, $d_H$ induces a metric on $\mathcal{C}$ and we have $[K]= \lim_{i \in \N}[P_i]$.  Then
\[
\left([K] -1\right)^{n-1} = \left(\lim_{i \in \N}\left[P_i\right]-1\right)^{n-1}=\lim_{i \in \N}\left(\left[P_i\right]-1\right)^{n-1} = 0.
\]

It follows that the logarithm of the class $[K]$ given by
\begin{align}\label{eq:log}
\log\left([K]\right) \coloneqq \sum_{r=1}^n(-1)^{r+1}\frac{1}{r}\left([K]-1\right)^r
\end{align}
is well defined.
\item $\mathcal{C}$ has a structure of a graded $\Q$-algebra 
\[
\mathcal{C} = \bigoplus_{\ell=0}^{\infty} \mathcal{C}_{\ell},
\]
where the $\ell$'th graded component $\mathcal{C}_{\ell}$ is the $\Q$-vector space spanned by all elements of the form $\left(\log[K]\right)^{\ell}$, for $K$ running through all compact convex sets in $\mathcal{K}$. 


\end{enumerate}

\begin{rem} The grading on $\mathcal{C}$ is the direct generalization of the grading on the polytope algebrta $\Pi$ \cite{Brion}. It is explained by Corollaries \ref{cor:inj} and \ref{cor:inj2}. Recall that $n$ denotes the dimension of the vector space $M_{\R}$. In the case of the polytope algebra, it  is shown in \cite{Brion} that the graded components $\Pi_k$ vanish for $k > n$. We have a similar result for $\mathcal{C}$ in Corollary \ref{cor:inj2}. 
\end{rem}
The following definition is taken from \cite[Section 4]{FS}.
\begin{Def}\label{def:subalg}
Let $P \in \mathcal{P}$ be a polytope. Then $\Pi(P)$ is the $\Q$-subalgebra of $\Pi$ generated by all classes $[Q] \in \Pi$, such that $Q \in \mathcal{P}$ is a Minkowski summand of $P$, i.e. such that $P = \lambda Q + R$ for some $\lambda \in \Q_{>0}$ and some polytope $R \in \mathcal{P}$. 
\end{Def}
We make the following remarks concerning the subalgebra $\Pi(P)$ \cite[Proposition 6.2.13]{CLS}. 
\begin{rem}\label{rem:toric-nef}
\begin{enumerate}
\item Let $Q \in \mathcal{P}$. The class $[Q]$ belongs to $\Pi(P)$ if and only if the normal fan $\Sigma_P$ of $P$ is a refinement of the normal fan $\Sigma_Q$ of $Q$. Hence, we can say that $\Pi(P)$ is the $\Q$-subalgebra generated by the classes 
\begin{align}\label{eq:gen}
\left\{ [Q] \; \big{|} \; Q \in \mathcal{P} \; \text{and} \; \Sigma_P \geq \Sigma_Q \text{  in }W \right\}.
\end{align}
Alternatively, by the classes
\[
\left\{ [Q] \; \big{|} \; Q \in \mathcal{P} \; \text{and}\; P \geq Q \text{ in }\mathcal{P} \right\}.
\]
\item Recall that to any toric Weil $\Q$-divisor $D = \sum_{\tau \in \Sigma_P(1)}a_{\tau}D_{\tau}$ on the projective toric variety $X_{\Sigma_P}$, one can attach the rational polytope
\[
P_D \coloneqq \left\{m \in M_{\R} \, | \, \langle m, v_{\tau} \rangle \geq -a_{\tau} \right\} \subset M_{\R},
\]
where $v_{\tau}$ denotes the primitive vector spanning the ray $\tau$. 
Then, given $Q \in \mathcal{P}$, we have that $[Q] \in \Pi(P)$ if and only if there exists a nef toric divisor $D$ on $X_{\Sigma_P}$ such that $Q$ is the polytope associated to $D$, i.e. such that $Q = P_D$. Hence, there is a bijection between the distinguished set of generators \eqref{eq:gen} and nef toric $\Q$-divisors on $X_{\Sigma_p}$. 
 \end{enumerate} 
\end{rem}
\section{Intersection theory of toric b-divisors on toric varieties}\label{sec:b-divisors}
Let notations be as in Section \ref{sec:not}. We recall the main definitions and integrability properties of toric b-divisors. For a more detailed introduction to this subject we refer to \cite{botero} (see also \cite{botero2}).

Let $\Sigma \in W_{\sm}'$ be a smooth, non-degenerate complete fan in $N_{\R}$ and let $X_{\Sigma}$ be its corresponding smooth and complete $n$-dimensional toric variety. Let $W_{\sm}'(\Sigma) \subset W_{\sm}'$ consist of all smooth subdivisions of $\Sigma$ with its induced directed set structure. The \emph{toric Riemann--Zariski Space} of $X_{\Sigma}$ is defined as the inverse limit 
\[
\mathfrak{X}^{\tor}_{\Sigma} \coloneqq \varprojlim_{\Sigma' \in W_{\sm}'(\Sigma)}X_{\Sigma'},
\] 
with maps given by the toric proper birational morphisms $\pi\colon X_{\Sigma''} \to X_{\Sigma'}$ induced whenever $\Sigma'' \geq \Sigma'$ in $W_{\sm}'(\Sigma)$. 

Given $\Sigma' \in W'_{\sm}(\Sigma)$, we denote by $ \mathbb{T}\text{-}\Div(X_{\Sigma'})_{\Q}$ the set of toric $\Q$-divisors on $X_{\Sigma'}$. The group of \emph{toric Cartier b-divisors} on $\X^{\tor}_{\Sigma}$ is defined as the direct limit
\[
\text{Ca}\left(\mathfrak{X}^{\tor}_{\Sigma}\right)_{\Q} \coloneqq \varinjlim_{\Sigma' \in W_{\sm}'(\Sigma)} \mathbb{T}\text{-}\Div(X_{\Sigma'})_{\Q},
\]
with maps given by the pull-back maps of toric divisors.

The group of \emph{toric Weil b-divisors} on $\X^{\tor}_{\Sigma}$ is defined as the inverse limit
\[
\text{We}\left(\mathfrak{X}^{\tor}_{\Sigma}\right)_{\Q} \coloneqq \varprojlim_{\Sigma' \in W_{\sm}'(\Sigma)} \mathbb{T}\text{-}\Div(X_{\Sigma'})_{\Q},
\]
with maps given by the push-forward maps of toric divisors. 

We will denote b-divisors with a bold $\D$ to distinguish them from classical divisors $D$.

We have 
\[
\text{Ca}\left(\mathfrak{X}^{\tor}_{\Sigma}\right)_{\Q} \subset \text{We}\left(\mathfrak{X}^{\tor}_{\Sigma}\right)_{\Q}.
\]

More precisely, we can think of a Weil toric b-divisor as a net of toric $\Q$-divisors 
\[
\D = \left(D_{\Sigma'}\right)_{\Sigma' \in W_{\sm}'(\Sigma)},
\]
satisfying that $\pi_*D_{\Sigma''} = D_{\Sigma'}$ whenever $\Sigma'' \geq \Sigma'$. Then a Cartier toric b-divisor is a Weil toric b-divisor $\D$ as above which is determined on some $\widetilde{\Sigma} \in W'_{\sm}(\Sigma)$, i.e. such that for any other $\Sigma' \geq \widetilde{\Sigma}$ in $W'_{\sm}(\Sigma)$, we have that $D_{\Sigma'} = \pi^*D_{\widetilde{\Sigma}}$.

We now define the positivity notion which allows us to define top intersection numbers of toric b-divisors. 
\begin{Def}
A toric b-divisor $\D = \left(D_{\Sigma'}\right)_{\Sigma' \in W_{\sm}'(\Sigma)}$ is \emph{nef}, if $D_{\Sigma'} \in \mathbb{T}\text{-}\Div(X_{\Sigma'})$ is nef for all $\Sigma'$ in a cofinal subset of $W_{\sm}'(\Sigma)$. 
\end{Def}
It follows from basic toric geometry that there is a bijective correspondence between the set of nef toric b-divisors on $X_{\Sigma}$ and the set of $\mathbb{Q}$-valued, conical, $\Q$-concave functions on $N_{\Q}$ (see \cite[Remark 3.7]{botero}). 

\begin{Def}
The \emph{mixed degree} $\D_1 \dotsm \D_n$ of a collection of toric b-divisors is defined as the limit (in the sense of nets)
\[
      \D_1\dotsm \D_n \coloneqq \lim_{\Sigma' \in W_{\sm}'(\Sigma)} D_{1_{\Sigma'}} \dotsm D_{n_{\Sigma'}}
      \]
of top intersection numbers of toric divisors, provided this limit exists and is finite. In particular, if $\D = \D_1 = \dotsc = \D_n$, then the limit (in the sense of nets) 
\[
\D^n \coloneqq \lim_{\Sigma' \in W_{\sm}'(\Sigma)} D_{\Sigma'}^n, 
\]
provided this limit exists and is finite, is called the \emph{degree} of the toric b-divisor $\D$. A toric b-divisor whose degree exists, is said to be \emph{integrable}. 
\end{Def}

Now, the \emph{mixed volume} of a collection of convex sets $K_1, \dotsc, K_n$ in $M_{\R}$ is defined by 
\[
\MV\left(K_1, \dotsc,K_n\right) \coloneqq \sum_{j=1}^n(-1)^{n-j}\sum_{1\leq i_1<\dotsb <i_j\leq n}\vol\left(K_{i_1}+ \dotsb + K_{i_j}\right), 
\]
where the \enquote{$+$} refers to the Minkowski addition of convex sets.

Recall the definition of the stability set of a concave function from Section \ref{sec:conv-set}. The following theorem relates the mixed degree $\D_1 \dotsc \D_n$ of a collection of nef toric b-divisors with the mixed volume of convex sets. It is a combination of \cite[Theorems 4.9 and 4.12]{botero}.
    
\begin{theorem}\label{the:volume}

      Let $\D_1, \dotsc,\D_n$ be a collection of nef toric b-divisors on $X_{\Sigma}$ and let $\tilde{\phi}_i \colon N_{\mathbb{Q}} \to \mathbb{Q}$ be the corresponding $\Q$-concave functions for $i = 1, \dotsc, n$. Then the functions $\tilde{\phi}_i$ extend to conical concave functions ${\phi}_i \colon N_{\mathbb{R}} \to \mathbb{R}$.
      The mixed degree $\D_1\dotsm \D_n$ exists, and is given by the mixed volume of the stability sets $K_{{\phi}_i}$ of the concave conical functions ${\phi}_i$, i.e. we have that
      \[
      \D_1\dotsm \D_n = \MV\left(K_{{\phi}_1}, \dotsc, K_{{\phi}_n}\right).
      \]
      In particular, a nef toric b-divisor $\D$ is integrable, and its degree is given by 
      \[
      \D^n = n!\,\vol\left(K_{{\phi}}\right),
      \]
 where ${\phi}$ denotes the corresponding concave conical function. 
 
 \end{theorem}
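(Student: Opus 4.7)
The plan is to reduce to the classical dictionary between nef toric divisors on smooth complete toric varieties and lattice polytopes, combined with a limit argument based on the continuity of mixed volumes under Hausdorff convergence. The first step is to extend each $\tilde{\phi}_i$ to $N_{\R}$: a $\Q$-valued conical $\Q$-concave function on the dense $\Q$-subspace $N_{\Q}$ of $N_{\R}$ extends uniquely to a concave conical function $\phi_i \colon N_{\R} \to \R$, since concavity plus $\Q$-homogeneity propagates by continuity to $\R$-homogeneity and concavity, and finiteness is preserved (as $\tilde{\phi}_i$ is $\Q$-valued, not extended-real valued). The stability set $K_{\phi_i} \subset M_{\R}$ is then a compact convex set, rational in the sense of Definition~\ref{def:rational_convex}.

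Next I would identify the toric data on each refinement. Because $\D_i = (D_{i,\Sigma'})_{\Sigma'}$ is a Weil toric b-divisor compatible under push-forward, its multiplicity along the prime divisor $D_{\tau}$ corresponding to a ray $\tau \in \Sigma'(1)$ is determined by $\phi_i(v_{\tau})$, where $v_{\tau}$ is the primitive generator of $\tau$. Consequently, the support function $h_{D_{i,\Sigma'}}$ is the unique function $\psi_{i,\Sigma'}$ linear on each cone of $\Sigma'$ and agreeing with $\phi_i$ on the $v_{\tau}$. By concavity of $\phi_i$ we obtain $\psi_{i,\Sigma'} \leq \phi_i$ pointwise, so $P_{D_{i,\Sigma'}} = K_{\psi_{i,\Sigma'}} \supseteq K_{\phi_i}$. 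Nefness of $\D_i$ (on a cofinal subset of $W_{\sm}'(\Sigma)$) makes $\psi_{i,\Sigma'}$ globally concave. As $\Sigma'$ varies through a cofinal family of smooth refinements whose rays become dense on the unit sphere in $N_{\R}$, the piecewise-linear interpolations $\psi_{i,\Sigma'}$ converge uniformly on compacta to $\phi_i$; by a standard polar/support-function argument this yields $P_{D_{i,\Sigma'}} \to K_{\phi_i}$ in the Hausdorff metric on $\mathcal{K}$.

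Finally, the classical toric intersection formula (Bernstein--Kushnirenko) gives
\[
D_{1,\Sigma'} \cdots D_{n,\Sigma'} \;=\; \MV\bigl(P_{D_{1,\Sigma'}}, \dotsc, P_{D_{n,\Sigma'}}\bigr)
\]
for each smooth $\Sigma'$ in the cofinal subset on which all $D_{i,\Sigma'}$ are nef. Since $\MV$ is continuous with respect to Hausdorff convergence on $\mathcal{K}$, passing to the limit over the directed set $W_{\sm}'(\Sigma)$ yields existence and finiteness of the mixed degree together with
\[
\D_1 \cdots \D_n \;=\; \MV\bigl(K_{\phi_1}, \dotsc, K_{\phi_n}\bigr).
\]
Specializing to $\D_1 = \cdots = \D_n = \D$ and using $\MV(K, \dotsc, K) = n!\,\vol(K)$ gives $\D^n = n!\,\vol(K_{\phi})$. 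The main obstacle is the Hausdorff-convergence step in the second paragraph: one must show this holds as a convergence of nets over $W_{\sm}'(\Sigma)$, not merely along some sequence of refinements, which requires extracting a genuinely cofinal family whose rays are asymptotically dense on the sphere and using uniform continuity of $\phi_i$ on compact subsets of $N_{\R}$ to control the PL approximation globally.
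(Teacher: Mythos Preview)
The paper does not give its own proof of this theorem: it is stated as a combination of \cite[Theorems~4.9 and~4.12]{botero} and is simply recalled from that earlier work. So there is no argument in the present paper to compare against, and your sketch should be measured against the proof in \cite{botero}.

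That said, your outline is essentially the strategy carried out there: identify the incarnation $D_{i,\Sigma'}$ via its support function as the piecewise-linear interpolant of $\phi_i$, use superadditivity of the conical concave $\phi_i$ to get $\psi_{i,\Sigma'}\le\phi_i$ and hence $P_{D_{i,\Sigma'}}\supseteq K_{\phi_i}$, and then pass to the limit using continuity of mixed volumes. One refinement worth making explicit, which you touch on at the end, is that the limit in the definition of the mixed degree is over \emph{all} of $W_{\sm}'(\Sigma)$, not only over the cofinal subset where every $D_{i,\Sigma'}$ is nef; convergence along a cofinal subnet does not by itself force convergence of the full net. In \cite{botero} this is handled by a monotonicity argument: once $\Sigma'$ is fine enough that all $D_{i,\Sigma'}$ are nef, further refinement only shrinks the polytopes $P_{D_{i,\Sigma'}}$ (they form a decreasing net with intersection $K_{\phi_i}$), so the mixed volumes are monotone and bounded below, and one checks separately that the intersection numbers on the intermediate, possibly non-nef models are squeezed accordingly. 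Building that monotonicity in would close the gap you flag in your last paragraph.
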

 
 \begin{rem}\label{rem:convex-b} The previous theorem associates a compact convex set $K_{\D} \in \mathcal{K}$ to a nef toric b-divisor $\D$. As its support function takes rational values on $N_{\Q}$, we have that $K_{\D}$ is a rational compact convex set.  Conversely, from \cite[Proposition~5.1]{botero}, we have that every rational compact convex set (modulo translations) arises in this way. 
We conclude that there is a set theoretical bijection
\[
K \longmapsto \D_K
\]
 between rational compact convex sets and nef toric b-divisors on all toric varieties $X_{\Sigma}$ for $\Sigma \in W'_{\sm}$. 
\end{rem}
 
\section{The toric b-Chow group}\label{sec:chow}
Let notations be as in Section \ref{sec:not}. We define the toric b-Chow group and the Cartier b-Chow ring of the toric Riemann--Zariski space and recall the relation between the latter and the polytope algebra. 

Let $\Sigma'' \geq \Sigma'$ in $W_{\sm}'$ and let $\pi \colon X_{\Sigma''} \to X_{\Sigma'}$ be the induced toric proper birational morphism. As in the case of divisors, we obtain both a push-forward map
\begin{align}\label{eq:push-cycles}
\pi_* \colon A^*\left( X_{\Sigma''} \right)_{\Q} \longrightarrow A^*\left( X_{\Sigma'} \right)_{\Q}
\end{align}
and a pull-back map
\begin{align}\label{eq:ppull-cycles}
\pi^* \colon A^*\left( X_{\Sigma'} \right)_{\Q} \longrightarrow A^*\left( X_{\Sigma''} \right)_{\Q}
\end{align}
between the Chow rings. The push-forward map is just a group homomorphism whereas the pull-back map preserves the ring structures.

\begin{Def}\label{def:tor-b-chow}
The \emph{toric b-Chow group} $A^*\left(\mathfrak{X}^{\tor}\right)_{\Q}$ of the toric Riemann--Zariski space $\X^{\tor}$
is defined as the projective limit 
\[
A^*\left(\mathfrak{X}^{\tor}\right)_{\Q}\coloneqq \varprojlim_{\Sigma \in W_{\sm}'} A^*\left(X_{\Sigma}\right)_{\Q}
\]
in the category of groups, with maps given by the push-forward maps \eqref{eq:push-cycles}. Elements in $A^*\left(\mathfrak{X}^{\tor}\right)_{\Q}$ are called \emph{toric b-classes}. 

The \emph{toric Cartier b-Chow ring} $A_{\operatorname{Ca}}^*\left(\mathfrak{X}^{\tor}\right)_{\Q}$ of the toric Riemann--Zariski space $\X^{\tor}$
is defined as the injective limit 
\[
A_{\operatorname{Ca}}^*\left(\mathfrak{X}^{\tor}\right)_{\Q}\coloneqq \varinjlim_{\Sigma \in W_{\sm}'} A^*\left(X_{\Sigma}\right)_{\Q}
\]
in the category of rings, with maps given by the pull-back maps \eqref{eq:ppull-cycles}. Elements in $A_{\operatorname{Ca}}^*\left(\mathfrak{X}^{\tor}\right)_{\Q}$ are called \emph{toric Cartier b-classes}. 

The toric Cartier b-Chow ring has a graded algebra structure 
\[
A_{\operatorname{Ca}}^*\left(\mathfrak{X}^{\tor}\right)_{\Q} = \bigoplus_{\ell = 0}^n A_{\operatorname{Ca}}^{\ell}\left(\mathfrak{X}^{\tor}\right)_{\Q},
\]
where 
\[
A_{\operatorname{Ca}}^{\ell}\left(\mathfrak{X}^{\tor}\right)_{\Q} = \varinjlim_{\Sigma \in W_{\sm}'} A^{\ell}\left(X_{\Sigma}\right)_{\Q}.
\]
 
On the other hand, the b-Chow group $A^*\left(\mathfrak{X}^{\tor}\right)_{\Q}$ decomposes as a direct sum of (possibly infinte dimensional) vector spaces
\[
A^*\left(\mathfrak{X}^{\tor}\right)_{\Q} = \bigoplus_{\ell = 0}^nA^{\ell}\left(\mathfrak{X}^{\tor}\right)_{\Q},
\]
where 
\[
A^{\ell}\left(\mathfrak{X}^{\tor}\right)_{\Q} = \varprojlim_{\Sigma \in W'_{\sm}}A^{\ell}\left(X_{\Sigma}\right).
\]

To see that both $A_{\operatorname{Ca}}^{\ell}\left(\mathfrak{X}^{\tor}\right)_{\Q}$ and $A^{\ell}\left(\mathfrak{X}^{\tor}\right)_{\Q}$ vanish in degrees $\ell \geq n$, note that we may restrict to projective fans and hence, to projective toric varieties. Indeed, these form a cofinal subset in $W'$ (see Remark \ref{rem:proj-fan}).

\end{Def}
\begin{rem} 
There is a natural embedding $A_{\operatorname{Ca}}^*\left(\mathfrak{X}^{\tor}\right)_{\Q} \subset A^*\left(\mathfrak{X}^{\tor}\right)_{\Q}$. Moreover, it follows from the projection formula that $A_{\operatorname{Ca}}^*\left(\mathfrak{X}^{\tor}\right)_{\Q}$ acts on $A^*\left(\mathfrak{X}^{\tor}\right)_{\Q}$ making the latter a module over the former.
\end{rem}

We now state known important relations between the toric Cartier b-Chow ring and the polytope algebra $\Pi$. The following theorem follows from \cite[Theorem 4.1]{FS} (see also \cite[Theorem 14.1]{McM2}).
\begin{theorem}\label{th:iso}
Let $P \in \mathcal{P}_{\sm}$. There exists an isomorphism of graded $\Q$-algebras 
\[
\Theta_P \colon \Pi(P) \longrightarrow A^*\left(X_{\Sigma_P}\right)_{\Q}
\]
satisfying that 
\[
[Q] \longmapsto \exp(D_Q) \coloneqq \sum_{r=0}^{\dim X_{\Sigma_P}} \frac{D_Q^r}{r!}
\]
for the distinguished set of generators from \eqref{eq:gen}. Here, $D_Q$ is the nef toric divisor on $X_{\Sigma_P}$ corresponding to $Q$.
\end{theorem}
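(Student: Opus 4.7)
The plan is to deduce the theorem from \cite[Theorem~4.1]{FS} (equivalently \cite[Theorem~14.1]{McM2}), using the dictionary in Remark \ref{rem:toric-nef} between Minkowski summands of $P$ and nef toric Cartier divisors on $X_{\Sigma_P}$. For each Minkowski summand $Q$ of $P$, the refinement $\Sigma_P \geq \Sigma_Q$ makes $h_Q$ piecewise linear with respect to $\Sigma_P$ and hence defines a toric Cartier divisor $D_Q$ on $X_{\Sigma_P}$, which is nef because $h_Q$ is concave. I would set
\[
\Theta_P([Q]) \coloneqq \exp(D_Q) = \sum_{r=0}^{n}\frac{D_Q^r}{r!},
\]
which is well defined because $A^r(X_{\Sigma_P})_{\Q} = 0$ for $r > n = \dim X_{\Sigma_P}$.

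Next I would verify that the three defining relations of $\Pi(P)$ are respected. Translation invariance is immediate: $h_{Q+t} = h_Q + \langle t, \cdot \rangle$ shows $D_{Q+t} - D_Q = \div(\chi^{-t})$ is principal, so $D_{Q+t}$ and $D_Q$ agree in $A^1_{\Q}$. Minkowski multiplicativity follows from $h_{Q_1 + Q_2} = h_{Q_1} + h_{Q_2}$, giving $D_{Q_1 + Q_2} = D_{Q_1} + D_{Q_2}$ and hence $\exp(D_{Q_1 + Q_2}) = \exp(D_{Q_1})\cdot \exp(D_{Q_2})$, since Chow classes commute. The valuation relation $\exp(D_{Q_1 \cup Q_2}) + \exp(D_{Q_1 \cap Q_2}) = \exp(D_{Q_1}) + \exp(D_{Q_2})$ is the subtle point: it is \emph{not} formal from the additive identity $D_{Q_1 \cup Q_2} + D_{Q_1 \cap Q_2} = D_{Q_1} + D_{Q_2}$. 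Here I would invoke the Minkowski-weight duality of \cite[Theorem~4.1]{FS}, under which this identity becomes tautological on the weight side. A cleaner alternative is to first define $\Theta_P$ on the degree-one subspace $\Pi(P)_1$ by $\log[Q] \mapsto D_Q$ (manifestly additive in the support functions, hence well defined) and extend multiplicatively using McMullen's presentation of $\Pi(P)$ as a graded algebra generated by $\Pi(P)_1$ with relations matching those of $A^*(X_{\Sigma_P})_{\Q}$.

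For surjectivity, $A^*(X_{\Sigma_P})_{\Q}$ is generated as a $\Q$-algebra by $\Pic(X_{\Sigma_P})_{\Q}$ (smoothness and completeness of $X_{\Sigma_P}$), and every divisor class is a $\Q$-difference of nef ones since $\Sigma_P$ is projective; by Remark \ref{rem:toric-nef}(2) the nef classes are exactly the $D_Q$'s, and the degree-one component of $\exp(D_Q)$ is $D_Q$, so the images generate. For injectivity (and simultaneously graded-ness), I would rely on McMullen's dimension count for $\Pi(P)$ matching $\dim_{\Q} A^*(X_{\Sigma_P})_{\Q}$, or equivalently exhibit the inverse of $\Theta_P$ via Minkowski weights as in \cite{FS}. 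Graded-ness is automatic from $\Theta_P(\log[Q]) = \log\exp(D_Q) = D_Q \in A^1(X_{\Sigma_P})_{\Q}$. The main obstacle is the valuation relation, which genuinely requires either Fulton--Sturmfels' Minkowski-weight duality or McMullen's structural theorem on $\Pi(P)$; everything else is bookkeeping on support functions and standard toric facts.
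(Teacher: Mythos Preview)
Your proposal is correct and aligns with the paper's treatment: the paper does not prove this theorem at all but simply cites \cite[Theorem~4.1]{FS} and \cite[Theorem~14.1]{McM2}, exactly the sources you invoke. Your write-up is in fact considerably more detailed than the paper's one-line attribution, and your identification of the valuation relation as the only non-formal step (requiring either Fulton--Sturmfels' Minkowski-weight duality or McMullen's structure theorem) is accurate.
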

Note that for any $P_2 \geq P_1$ in $\mathcal{P}_{\sm}$ we have natural inclusions $\Pi(P_1) \hookrightarrow \Pi(P_2)$. Hence, it is natural to consider the direct limit $\varinjlim_{P \in \mathcal{P}} \Pi(P)$ with respect to these inclusions. This is clearly equal to $\Pi$. As it turns out, these inclusion morphsims are compatible with the pull-back morphisms between the Chow groups, i.e. if $P_1$ and $P_2$ are of dimension $n$, then the following diagram commutes.
\begin{figure}[h]
\begin{center}
    \begin{tikzpicture}
      \matrix[dmatrix] (m)
      {
        \Pi(P_1) & A^*\left(X_{\Sigma_{P_1}}\right)_{\Q}\\
       \Pi(P_2) & A^*\left(X_{\Sigma_{P_2}}\right)_{\Q} \\
      };
      \draw[->] (m-1-1) to node[above]{$\Theta_{P_1}$} (m-1-2);
      \draw[->] (m-2-1) to node[below]{$\Theta_{P_2}$} (m-2-2);
      \draw[right hook ->] (m-1-1)--(m-2-1);
      \draw[->] (m-1-2) to node[right]{$\pi^*$} (m-2-2);
     \end{tikzpicture}
     \end{center}
     \end{figure}
     
 Thus, the isomorphisms $\Theta_P$, as we range over all polytopes $P \in \mathcal{P}_{\sm}$, induce an isomorphsim 
\begin{eqnarray}\label{eq:poly-iso}
f \colon \Pi \longrightarrow A_{\operatorname{Ca}}^*\left(\mathfrak{X}^{\tor}\right)_{\Q}
\end{eqnarray}
\cite[Theorem 4.2]{FS}.
 
It follows that the polytope algebra $\Pi$ can be viewed as a ring of intersection theory for toric classes on all (smooth) toric compactifications of the torus $\T$.

The main goal of this article is to define a ring of intersection theory for sufficiently positive toric b-classes which are not necessarily Cartier. In order to do this, as a main tool, we study spaces of rational piecewise polynomial functions in the next section. 

\section{Relation with spaces of rational piecewise polynomial functions}\label{sec:ppf}
Let notations be as in Section \ref{sec:not}. For a given polytope $P \in \mathcal{P}_{\sm}$, we relate the spaces $\Pi(P)$ and $A^*\left(X_{\Sigma_P}\right)_{\Q}$ with spaces of rational piecewise polynomial functions on $N_{\R}$. We mainly follow \cite{Pay} (see also \cite[Section 2]{Brion}).

\subsection*{The ring of rational piecewise polynomial functions}   
   Let $\Sigma \in W'_{\sm}$. For a cone $\sigma \in \Sigma$ we set $M_{\sigma} = M/\left(\sigma^{\perp} \cap M\right)$ and $M_{\sigma, \Q} =  M_{\sigma} \otimes_{\Z} \Q $. The ring of \emph{rational piecewise polynomial functions} $R_{\Sigma}$ is given by
   \[
   R_{\Sigma} \coloneqq \left\{f \colon |\Sigma| \to \R \text{ continuous } \big{|} \; f|_{\sigma} \in \operatorname{Sym} \left(M_{\sigma, \Q} \right) \text{ for each } \sigma \in \Sigma \right\}.
   \]
  The map $f \mapsto \left(f|_{\sigma}\right)_{\sigma \in \Sigma}$ identifies $R_{\Sigma}$ with a subring of $\bigoplus_{\sigma \in \Sigma} \operatorname{Sym} \left(M_{\sigma, \Q} \right)$,
  \[
  R_{\Sigma}\simeq \left\{ \left(f_{\sigma}\right)_{\sigma \in \Sigma} \; \big{|} \; f_{\tau} = f_{\sigma}|_{\tau} \text{ for } \tau \prec \sigma \right\}.
  \]
Here, the notation $\tau \prec \sigma$ denotes that $\tau$ is a face of $\sigma$.
  \begin{rem} In \cite{Pay} the ring of \emph{integral} piecewise polynomial functions is defined, wheras in \cite{Brion} no rationality condition is given. We adapt these definitions to the rational case since we are dealing with rational polytopes and $\Q$-coeffients. 
  \end{rem}

The set $R_{\Sigma}$ has a structure of a graded $\Q$-algebra over $M_{\Q}$ (where the grading is given by the degree). It is called the \emph{algebra of rational piecewise polynomial functions on} $\Sigma$. 

Now, for any ray $\tau \in \Sigma(1)$, the semigroup $\tau \cap \N$ has a unique integral generator $v_{\tau}$. Since $\Sigma$ is smooth, we have that any continuous and piecewise linear function on the support of $\Sigma$ is uniquely defined by its values at the $v_{\tau}$, $\tau \in \Sigma(1)$. In particular, there exists a unique continuous, piecewise linear function $\phi_{\tau}$ such that $\phi_{\tau}\left(v_{\tau}\right) = 1$ and $\phi_{\tau}\left(v_{\tau'}\right) = 0$ for all $\tau' \neq \tau$. 
\begin{Def} Let $\Sigma \in W'_{\sm}$. For any cone $\sigma \in \Sigma$ we set 
\[
\phi_{\sigma} \coloneqq \Pi_{\tau \in \sigma(1)}\phi_{\tau}.
\]
\end{Def}
Then $\phi_{\sigma}$ is a rational piecewise polynomial function, homogeneous of degree $\dim(\sigma)$ and which vanishes outside the star of $\sigma$ in $\Sigma$.

The following definition is adapted from \cite[Section 2.3]{Brion}.
\begin{Def}\label{def:push-functions}
Let $\Sigma' \geq \Sigma$ in $W'_{\sm}$. We define the map
\[ 
\pi_{\Sigma', \Sigma} \colon R_{\Sigma'} \longrightarrow R_{\Sigma}
\]
by
\[
f = \left( f_{\sigma'} \right)_{\sigma' \in \Sigma'} \longmapsto \left( \pi_{\Sigma', \Sigma}(f)_{\sigma} \right)_{\sigma \in \Sigma}
\]
where
\[
 \pi_{\Sigma', \Sigma}(f)_{\sigma} = \phi_{\sigma} \sum_{\substack{\sigma' \subset \sigma \\ \dim(\sigma') = \dim(\sigma)}} \frac{f_{\sigma'}}{\phi_{\sigma'}}.
\]
\end{Def}
The next theorem follows from \cite[Theorem 2.3 and Corollary 2.3]{Brion}.
\begin{theorem}\label{th:comb-push}
Let $\Sigma' \geq \Sigma$ in $W'_{\sm}$. The map $\pi_{\Sigma', \Sigma}\colon R_{\Sigma'} \to R_{\Sigma}$ is well defined and satisfies the following properties:
\begin{enumerate}
\item $\pi_{\Sigma', \Sigma}(1) = 1$.
\item $\pi_{\Sigma', \Sigma}$ is $R_{\Sigma}$-linear.
\item $\pi_{\Sigma', \Sigma}$ is homogeneous of degree $0$.
\item $\pi_{\Sigma',\Sigma} \circ \pi_{\Sigma'', \Sigma'} = \pi_{\Sigma'', \Sigma}$
 for any $\Sigma'' \geq \Sigma' \geq \Sigma$ in $W'_{\sm}$.
\end{enumerate}
Moreover, $\pi_{\Sigma', \Sigma}$ is uniquely determined by the properties $(1)$--$(3)$. 
\end{theorem}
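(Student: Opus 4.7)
The plan is to verify well-definedness together with properties (1)--(4) by direct computation from the defining formula, and to establish uniqueness by exploiting the $R_\Sigma$-module structure on $R_{\Sigma'}$. I would begin with well-definedness. Fix $\sigma \in \Sigma$; the subcones $\sigma' \subset \sigma$ with $\dim\sigma' = \dim\sigma$ form a smooth simplicial refinement of $\sigma$, and since they share the same linear span, all of the rings $\operatorname{Sym}(M_{\sigma',\Q})$ coincide with $\operatorname{Sym}(M_{\sigma,\Q})$. Viewed in the fraction field of this common symmetric algebra, each term $f_{\sigma'}/\phi_{\sigma'}$ is a rational function whose poles lie along the rays of $\sigma'$. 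The polynomiality of $\phi_\sigma \sum f_{\sigma'}/\phi_{\sigma'}$ then follows from the Brion--Vergne localization identity $\sum_{\sigma'} 1/\phi_{\sigma'} = 1/\phi_\sigma$: after clearing denominators, the spurious poles introduced by the individual $\phi_{\sigma'}$ cancel against $\phi_\sigma$, and the numerators $f_{\sigma'}$ contribute only polynomial factors. Continuity across a common face $\tau \prec \sigma$ is then obtained by induction on $\dim \sigma$, using that the top-dimensional subcones of $\tau$ in $\Sigma'$ are precisely the faces of those $\sigma' \subset \sigma$ that meet $\tau$.

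Properties (1)--(3) are then straightforward consequences. Property (1) is exactly the localization identity above applied to $f = 1$. Property (2), $R_\Sigma$-linearity, is immediate: any $g \in R_\Sigma$ restricts to the same polynomial $g|_\sigma$ on each $\sigma' \subset \sigma$, so $g|_\sigma$ factors out of the inner sum. Property (3) is a routine degree count: $f_{\sigma'}$ is homogeneous of degree $\deg f$ while both $\phi_\sigma$ and $\phi_{\sigma'}$ have degree $\dim\sigma$, so the ratio $\phi_\sigma/\phi_{\sigma'}$ is of degree zero and $\pi_{\Sigma',\Sigma}$ preserves the grading.

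For property (4), given $\Sigma'' \geq \Sigma' \geq \Sigma$, I would expand $\pi_{\Sigma',\Sigma}\circ\pi_{\Sigma'',\Sigma'}(f)_\sigma$ as a double sum and reorganize it using the fact that every top-dimensional $\sigma'' \subset \sigma$ in $\Sigma''$ lies in a unique top-dimensional $\sigma' \subset \sigma$ in $\Sigma'$; a further application of the localization identity at the intermediate level collapses the expression to $\pi_{\Sigma'',\Sigma}(f)_\sigma$. Alternatively, once uniqueness is established, (4) follows because the composition satisfies (1)--(3). For uniqueness, suppose $\rho\colon R_{\Sigma'} \to R_\Sigma$ is any $R_\Sigma$-linear map of degree zero sending $1$ to $1$. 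The key idea is that $R_{\Sigma'}$ is generated as an $R_\Sigma$-module by the piecewise linear functions $\phi_\tau$ for $\tau$ a ray of $\Sigma'$ not already in $\Sigma$, together with their products $\phi_{\sigma'}$; combining the normalization $\rho(1) = 1$ with $R_\Sigma$-linearity and the localization identity forces the value of $\rho$ on these generators to coincide with $\pi_{\Sigma',\Sigma}$.

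The main obstacle is the uniqueness statement, which requires a sufficiently explicit description of $R_{\Sigma'}$ as an $R_\Sigma$-module to reduce the problem to a computation on a canonical generating set; for this step I would rely on the combinatorial arguments developed in \cite[Section 2]{Brion}. Well-definedness is also non-trivial but ultimately reduces cleanly to the Brion--Vergne localization identity, after which properties (1)--(3) amount to bookkeeping and property (4) follows either by direct manipulation of the double sum or as a corollary of uniqueness.
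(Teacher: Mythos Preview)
The paper does not supply its own proof of this theorem; it simply records that the statement follows from \cite[Theorem~2.3 and Corollary~2.3]{Brion}. Your sketch is essentially an outline of Brion's argument in that reference---well-definedness via the localization identity $\sum_{\sigma'} 1/\phi_{\sigma'} = 1/\phi_\sigma$, properties (1)--(3) by inspection, and uniqueness through the $R_\Sigma$-module structure of $R_{\Sigma'}$---so there is nothing to compare: you have reconstructed the cited source rather than diverged from the paper.
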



\begin{Def}
Let $\Sigma \in W'_{\sm}$. Consider $M_{\Q}R_{\Sigma} \subset R_{\Sigma}$, the graded ideal generated by all (global) rational linear functions on $N_{\R}$. We define the quotient 
\[
\overline{R}_{\Sigma} \coloneqq R_{\Sigma}/\left(M_{\Q}R_{\Sigma}\right),
\]
which inherits the structure of a graded $\Q$-algebra.
\end{Def}

Now, for $\Sigma' \geq \Sigma$ in $W'_{\sm}$, the map $\pi_{\Sigma',\Sigma} \colon R_{\Sigma'} \to R_{\Sigma}$ from Theorem \ref{th:comb-push} induces a map 
\[
\overline{R}_{\Sigma'} \longrightarrow \overline{R}_{\Sigma},
\]
which we also denote by $\pi_{\Sigma',\Sigma}$. It satisfies the same properties $(1)$--$ (4)$ from above but with $R_{\Sigma}$-linear replaced by $\overline{R}_{\Sigma}$-linear.

\subsection*{Relations to Chow rings and to the polytope algebra}

Let $\Sigma \in W'_{\sm}$. The next theorem relates the Chow ring of the toric variety $X_{\Sigma}$ to the graded $\Q$-algebra $\overline{R}_{\Sigma}$. Recall that $n$ is the dimension of $X_{\Sigma}$.
\begin{theorem}\label{th:chow}
Let $\Sigma \in W'_{\sm}$. There is an isomorphism of graded $\Q$-algebras
\[
\alpha_{\Sigma}\colon A^*\left(X_{\Sigma}\right)_{\Q} \longrightarrow \overline{R}_{\Sigma},
\]
which takes the class of a toric divisor $D$ of $X_{\Sigma}$ to the class of its corresponding support function $h_D$. 
\end{theorem}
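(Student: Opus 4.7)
My plan is to construct $\alpha_\Sigma$ explicitly on generators and identify both sides with the same quotient of a polynomial ring. Since $\Sigma$ is smooth and complete, the Danilov--Jurkiewicz theorem presents the Chow ring as
\[
A^*(X_\Sigma)_\Q \;\cong\; \Q[x_\tau : \tau \in \Sigma(1)]\,/\,(I_{\mathrm{SR}} + J_{\mathrm{lin}}),
\]
where $I_{\mathrm{SR}}$ is the Stanley--Reisner ideal generated by the monomials $\prod_{\tau \in S} x_\tau$ for $S \subset \Sigma(1)$ not spanning a cone of $\Sigma$, and $J_{\mathrm{lin}}$ is generated by the linear forms $\sum_{\tau} \langle m, v_\tau \rangle x_\tau$ for $m \in M_\Q$ (coming from principal divisors $\div(\chi^m)$). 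Under this presentation, $x_\tau$ maps to the class of the prime toric divisor $D_\tau$.

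The first key step is to identify $R_\Sigma$ itself with the Stanley--Reisner ring $\Q[x_\tau]/I_{\mathrm{SR}}$ via the assignment $x_\tau \mapsto \phi_\tau$; this is the content of Payne's combinatorial description of piecewise polynomial functions on a smooth fan \cite{Pay}, adapted to the rational setting. The Courant-type functions $\phi_\tau$ generate $R_\Sigma$ because $\Sigma$ is smooth (so that every piecewise polynomial may be written polynomially on each cone in terms of the $\phi_\tau$), and their relations are precisely $\prod_{\tau \in S} \phi_\tau = 0$ whenever $S \not\subset \sigma(1)$ for some $\sigma \in \Sigma$, which gives $I_{\mathrm{SR}}$ exactly. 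This identification is graded by degree.

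The second step is to match the linear ideals. For any $m \in M_\Q$, the global linear function $u \mapsto \langle m, u \rangle$ is piecewise linear and hence lies in $R_\Sigma$, and since $\Sigma$ is smooth one computes directly
\[
m \;=\; \sum_{\tau \in \Sigma(1)} \langle m, v_\tau \rangle\, \phi_\tau
\]
(both sides are piecewise linear and agree on every primitive generator $v_{\tau}$). Therefore the ideal $M_\Q R_\Sigma \subset R_\Sigma$ corresponds, under $x_\tau \leftrightarrow \phi_\tau$, precisely to $J_{\mathrm{lin}}$, and passing to the quotient produces an isomorphism of graded $\Q$-algebras
\[
\overline{R}_\Sigma \;\cong\; \Q[x_\tau]/(I_{\mathrm{SR}} + J_{\mathrm{lin}}) \;\cong\; A^*(X_\Sigma)_\Q,
\]
sending $[\phi_\tau]$ to $[D_\tau]$. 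I then define $\alpha_\Sigma$ as the inverse of this composite, so that $[D_\tau] \mapsto [\phi_\tau]$. The statement about arbitrary toric divisors then follows by linearity: for $D = \sum_\tau a_\tau D_\tau$ one has $h_D = \sum_\tau a_\tau \phi_\tau$ in $R_\Sigma$ (again by comparing values on ray generators), so $\alpha_\Sigma([D]) = [h_D]$ as required.

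The main obstacle is the identification of $R_\Sigma$ with the Stanley--Reisner ring in the first step. Surjectivity onto the Stanley--Reisner ring is straightforward, but verifying that the only relations among the $\phi_\tau$ inside $R_\Sigma$ are the Stanley--Reisner monomials (rather than something more refined) uses the smoothness of $\Sigma$ in an essential way---one argues locally on each maximal cone $\sigma$, where $R_\Sigma|_\sigma \cong \operatorname{Sym}(M_{\sigma,\Q})$ and the $\phi_\tau$ for $\tau \in \sigma(1)$ form a dual basis to the $v_\tau$, and then globalizes using the compatibility condition on faces. This technical core is essentially \cite[Theorem~2.3]{Brion} translated to the rational setting, so the proof will cite it and then verify only the matching of the linear ideals and the image of $[D]$.
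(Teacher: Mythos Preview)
Your argument is correct, but it takes a different route from the paper's proof. The paper proceeds via the \emph{equivariant} Chow ring: it invokes Payne's theorem that the restriction-to-orbits map $\bigoplus_{\sigma}\iota_\sigma^* \colon A^*_T(X_\Sigma)_\Q \to R_\Sigma$ is an isomorphism, and then Brion's identification $A^*_T(X_\Sigma)_\Q / M_\Q A^*_T(X_\Sigma)_\Q \simeq A^*(X_\Sigma)_\Q$, so that quotienting $R_\Sigma$ by $M_\Q R_\Sigma$ yields $A^*(X_\Sigma)_\Q$ directly. Your approach instead starts from the Danilov--Jurkiewicz presentation of $A^*(X_\Sigma)_\Q$ and matches it against a Stanley--Reisner presentation of $R_\Sigma$ generated by the Courant functions $\phi_\tau$.

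The two arguments are really the same content viewed from opposite ends: Payne's theorem is precisely the statement that $A^*_T(X_\Sigma)_\Q$ \emph{is} the Stanley--Reisner ring $\Q[x_\tau]/I_{\mathrm{SR}}$, and the passage from equivariant to ordinary Chow by killing $M_\Q$ is exactly your matching of $J_{\mathrm{lin}}$ with $M_\Q R_\Sigma$. Your version is slightly more hands-on (it avoids mentioning equivariant theory at all) and makes the image of $[D_\tau]$ transparent; the paper's version is shorter because it outsources both identifications to the cited references. One minor point: the reference you give for the Stanley--Reisner description of $R_\Sigma$ (``\cite[Theorem~2.3]{Brion}'') is not quite the right pointer in this paper's bibliography---that result is about the push-forward map $\pi_{\Sigma',\Sigma}$; the identification you need is the one the paper attributes to \cite[Theorem~1]{Pay}.
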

\begin{proof}
Consider the \emph{equivariant Chow ring} $A^*_{\operatorname{T}}(X_{\Sigma})$ of the smooth toric variety $X_{\Sigma}$ as defined in \cite{Pay}. 

For $\sigma \in \Sigma$ let $O_{\sigma} \subset X_{\Sigma}$ be the associated toric orbit. As is explained in \emph{loc.~cit.}, there is a natural isomorphism $A^*_{\operatorname{T}} \left(O_{\sigma}\right)_{\Q} \simeq \operatorname{Sym}\left(M_{\sigma,\Q}\right)$. Indeed, for $u \in M_{\Q}$, its image in $M_{\sigma,\Q}$ is identified with the first equivariant Chern class of the equivariant line bundle $\mathcal{O}_{X_{\Sigma}}\left(\operatorname{div}\chi^u\right)|_{O_{\sigma}}$ in $A_{\operatorname{T}}^1\left(O_{\sigma}\right)$.

 Let $\iota_{\sigma} \colon O_{\sigma} \hookrightarrow X_{\Sigma}$ be the inclusion morphism. By \cite[Theorem 1]{Pay}, the map
\[
\bigoplus_{\sigma \in \Sigma}\iota_{\sigma}^* \colon A^*_{\operatorname{T}}\left(X_{\Sigma}\right)_{\Q} \longrightarrow R_{\Sigma}
\]
is an isomorphsim. Moreover, by \cite[Section 2.3]{Brion}, there is a natural isomorphism 
\[
A^*_{\operatorname{T}}\left(X_{\Sigma}\right)_{\Q} / M_{\Q} A^*_{\operatorname{T}}\left(X_{\Sigma}\right)_{\Q} \simeq A^* \left(X_{\Sigma}\right)_{\Q}.
\]
Thus, the theorem follows. 
\end{proof}
We obtain the following corollary (see also \cite[Section 2]{Brion}). 
\begin{cor} 
The graded algebra $\overline{R}_{\Sigma} = \oplus_{\ell = 0}^{\infty}\overline{R}_{\Sigma, \ell}$ vanishes on all degrees $\ell > n$. Moreover, the $\Q$-vector space $\overline{R}_{\Sigma,n}$ is one-dimensional and multiplication in $\overline{R}_{\Sigma}$ induces non-degenrated pairings $\overline{R}_{\Sigma, j} \times \overline{R}_{\Sigma, n-j} \to \overline{R}_{\Sigma, n}$ for $1 \leq j \leq n-1$.
\end{cor}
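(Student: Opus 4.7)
The plan is to deduce the corollary directly from Theorem \ref{th:chow}, since the isomorphism $\alpha_{\Sigma} \colon A^*(X_{\Sigma})_{\Q} \to \overline{R}_{\Sigma}$ of graded $\Q$-algebras transports each of the three claims into a standard statement about the Chow ring of the smooth complete toric variety $X_{\Sigma}$ of dimension $n$.

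First I would handle the vanishing in degree $\ell > n$: this is immediate from $\alpha_{\Sigma}$ together with the fact that $A^{\ell}(X_{\Sigma})_{\Q} = 0$ for $\ell$ larger than $\dim X_{\Sigma} = n$. Next, one-dimensionality of $\overline{R}_{\Sigma,n}$: again by $\alpha_{\Sigma}$ it suffices to show $A^n(X_{\Sigma})_{\Q} \cong \Q$, and since $X_{\Sigma}$ is complete the degree map $\deg \colon A^n(X_{\Sigma})_{\Q} \to \Q$ furnishes the desired isomorphism (for a smooth complete toric variety this can be checked directly since $A_0(X_{\Sigma})_{\Q} \cong \Q$ is generated by the class of any torus-fixed point).

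For the non-degeneracy of the pairing $\overline{R}_{\Sigma,j} \times \overline{R}_{\Sigma,n-j} \to \overline{R}_{\Sigma,n}$, via $\alpha_{\Sigma}$ the task becomes Poincar\'e duality for the Chow ring of the smooth complete toric variety $X_{\Sigma}$. In the projective case this is classical (Poincar\'e duality for smooth projective toric varieties, cf.\ the description of $A^*(X_{\Sigma})_{\Q}$ in \cite{FS} or \cite{Brion}). To cover the general smooth complete case, one can either invoke the theorem of Danilov which gives Poincar\'e duality for all smooth complete toric varieties, or one can reduce to the projective case by choosing a projective refinement $\Sigma' \geq \Sigma$ (which exists by Remark~\ref{rem:proj-fan}) and using the compatibility of $\pi^*$ with the pairing and the projection formula: if $x \in A^j(X_{\Sigma})_{\Q}$ pairs trivially with every class in $A^{n-j}(X_{\Sigma})_{\Q}$, then $\pi^* x \in A^j(X_{\Sigma'})_{\Q}$ pairs trivially with $\pi^*(-)$, and since every class on $X_{\Sigma'}$ can be written combinatorially in terms of pull-backs and toric divisors supported on exceptional rays, one concludes $\pi^* x = 0$ and hence $x = 0$.

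The only mildly delicate step is this last one, verifying Poincar\'e duality in the not-necessarily-projective smooth complete case; everything else is just unpacking $\alpha_{\Sigma}$. In the write-up I would simply cite the relevant statement from \cite{Pay} or \cite{Brion}, as the whole corollary is essentially a dictionary translation.
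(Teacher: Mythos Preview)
Your approach is correct and matches the paper's: the corollary is stated there without an explicit proof, being an immediate consequence of the graded isomorphism $\alpha_{\Sigma}$ from Theorem~\ref{th:chow} together with the standard Poincar\'e duality facts for Chow rings of smooth complete toric varieties (the paper simply refers to \cite[Section~2]{Brion}). Your sketch of a reduction to the projective case is a bit loose---knowing that $\pi^{*}x$ pairs trivially with pull-back classes does not by itself force $\pi^{*}x=0$---but since you correctly note that one should just cite \cite{Brion} or \cite{Pay} for the duality statement, this is not a real gap.
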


For any function class $\varphi \in \overline{R}_{\Sigma}$ of degree one, we set 
\[
\exp(\varphi) = \sum_{i = 0}^{n} \varphi^i/i! \in \overline{R}_{\Sigma}.
\]
We obtain the following theorem. 
\begin{theorem}\label{th:1}
Let $P \in \mathcal{P}_{\sm}$ of dimension $n$. There is an isomorphism of $\Q$-graded algebras
\[
\varphi_P \colon \Pi(P) \simeq \overline{R}_{\Sigma}
\]
given by 
\[
[Q] \longmapsto \exp(h_Q).
\]
\end{theorem}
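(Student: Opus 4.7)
The plan is to construct $\varphi_P$ simply as the composition of the two isomorphisms already established in the preceding sections. Explicitly, I would set
\[
\varphi_P \coloneqq \alpha_{\Sigma_P} \circ \Theta_P \colon \Pi(P) \xrightarrow{\;\Theta_P\;} A^*\bigl(X_{\Sigma_P}\bigr)_{\Q} \xrightarrow{\;\alpha_{\Sigma_P}\;} \overline{R}_{\Sigma_P}.
\]
By Theorem~\ref{th:iso}, $\Theta_P$ is an isomorphism of graded $\Q$-algebras, and by Theorem~\ref{th:chow}, so is $\alpha_{\Sigma_P}$. Hence $\varphi_P$ is automatically an isomorphism of graded $\Q$-algebras, and no further check on the algebra structure is required.

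The remaining task is to verify the formula $\varphi_P([Q]) = \exp(h_Q)$ on the distinguished set of generators of $\Pi(P)$, namely classes $[Q]$ where $Q$ is a Minkowski summand of $P$ (equivalently, $\Sigma_P \geq \Sigma_Q$, cf.\ Remark~\ref{rem:toric-nef}). By Theorem~\ref{th:iso},
\[
\Theta_P([Q]) = \exp(D_Q) = \sum_{r=0}^n \frac{D_Q^r}{r!},
\]
where $D_Q$ is the nef toric divisor on $X_{\Sigma_P}$ attached to $Q$. Since $\alpha_{\Sigma_P}$ is a ring homomorphism that preserves the grading, it commutes with polynomial expressions, and in particular with the truncated exponential:
\[
\alpha_{\Sigma_P}\bigl(\exp(D_Q)\bigr) = \exp\bigl(\alpha_{\Sigma_P}(D_Q)\bigr) = \exp\bigl(h_{D_Q}\bigr).
\]

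The one substantive point, which I would expect to be the main technicality (though not really a deep obstacle), is to identify $h_{D_Q}$ with $h_Q$ inside $\overline{R}_{\Sigma_P}$. This is standard toric dictionary: writing $D_Q = \sum_{\tau \in \Sigma_P(1)} a_\tau D_\tau$, the polytope $Q = P_{D_Q}$ is given by $\{m \in M_\R : \langle m, v_\tau\rangle \geq -a_\tau\}$, and for a nef divisor the support function satisfies $h_{D_Q}(v_\tau) = -a_\tau = \inf_{m \in Q}\langle m, v_\tau\rangle = h_Q(v_\tau)$. Since both functions are piecewise linear with respect to $\Sigma_P$ and agree on the primitive ray generators, they coincide. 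Combining with the previous display yields $\varphi_P([Q]) = \exp(h_Q)$, completing the proof.
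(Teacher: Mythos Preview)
Your proof is correct and follows exactly the paper's approach: the paper's proof consists of the single sentence ``take $\varphi_P = \alpha_{\Sigma_P}\circ \Theta_P$'', citing Theorems~\ref{th:iso} and~\ref{th:chow}. You have simply spelled out in more detail why the composition sends $[Q]$ to $\exp(h_Q)$, including the identification $h_{D_Q} = h_Q$ that the paper leaves implicit.
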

\begin{proof}
This follows from a combination of Theorem \ref{th:iso} and Theorem \ref{th:chow}. Indeed, take $\varphi_P = \alpha_{\Sigma_P}\circ \Theta_P$.
\end{proof}

We now see that the combinatorial push-forward map from Theorem \ref{th:comb-push} is compatible with the push-forward map between the Chow groups. The following theorem is proven in \cite[Theorem 2.3]{Brion1}.
\begin{theorem}\label{th:comm1}
Let $\Sigma' \geq \Sigma \in W_{\sm}$ be non-degenarate, smooth fans. Then the following diagram commutes.
\begin{figure}[H]
\begin{center}
    \begin{tikzpicture}
      \matrix[dmatrix] (m)
      {
        \overline{R}_{\Sigma'} & A^*\left(X_{\Sigma'}\right)_{\Q}\\
       \overline{R}_{\Sigma} & A^*\left(X_{\Sigma}\right)_{\Q} \\
      };
      \draw[->] (m-1-1) to node[above]{$\alpha_{\Sigma'}^{-1}$} (m-1-2);
      \draw[->] (m-2-1) to node[below]{$\alpha_{\Sigma}^{-1}$} (m-2-2);
      \draw[->] (m-1-1) to node[left]{$\pi_{\Sigma', \Sigma}$} (m-2-1);
      \draw[->] (m-1-2) to node[right]{$\pi_*$} (m-2-2);
     \end{tikzpicture}
     \end{center}
   
     \end{figure}
Here, the maps $\alpha_{\Sigma'}, \alpha_{\Sigma}$ denote the isomorpisms from Theorem \ref{th:chow} and $\pi_{\Sigma', \Sigma}$ the map given in Theorem \ref{th:comb-push} and $\pi_*$ the push-forward map between the Chow groups. 
     \end{theorem}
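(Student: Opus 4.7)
The plan is to invoke the uniqueness clause of Theorem \ref{th:comb-push}: the map $\pi_{\Sigma',\Sigma}$ is characterised as the unique $\overline{R}_{\Sigma}$-linear, degree-zero homogeneous ring-theoretic map $\overline{R}_{\Sigma'} \to \overline{R}_{\Sigma}$ sending $1$ to $1$. Accordingly I would define
\[
\widetilde{\pi} \coloneqq \alpha_{\Sigma} \circ \pi_{*} \circ \alpha_{\Sigma'}^{-1} \colon \overline{R}_{\Sigma'} \longrightarrow \overline{R}_{\Sigma}
\]
and verify in turn that $\widetilde{\pi}(1) = 1$, that $\widetilde{\pi}$ is homogeneous of degree zero, and that $\widetilde{\pi}$ is $\overline{R}_{\Sigma}$-linear. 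Properties $(1)$ and $(3)$ of Theorem \ref{th:comb-push} then force $\widetilde{\pi}=\pi_{\Sigma',\Sigma}$, which is exactly the claim.

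For the normalisation, $1 \in \overline{R}_{\Sigma'}$ is the image of the fundamental class $[X_{\Sigma'}] \in A^{0}(X_{\Sigma'})_{\Q}$ under $\alpha_{\Sigma'}$; since $\pi$ is birational we have $\pi_{*}[X_{\Sigma'}]=[X_{\Sigma}]$, whose $\alpha_{\Sigma}$-image is $1 \in \overline{R}_{\Sigma}$. For the grading, Theorem \ref{th:chow} supplies $\alpha_{\Sigma},\alpha_{\Sigma'}$ as \emph{graded} isomorphisms, and $\pi_{*}$ preserves codimension because $\pi$ has relative dimension zero; hence $\widetilde{\pi}$ is homogeneous of degree zero.

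The substantive step is $\overline{R}_{\Sigma}$-linearity, which I would deduce from the projection formula $\pi_{*}(\pi^{*}x \cdot y) = x \cdot \pi_{*}y$. For this to give what we want I first need to identify the $\overline{R}_{\Sigma}$-module structure on $\overline{R}_{\Sigma'}$ (induced by the inclusion $\overline{R}_{\Sigma} \hookrightarrow \overline{R}_{\Sigma'}$, which is well-defined because every piecewise polynomial function on $\Sigma$ is automatically piecewise polynomial on the refinement $\Sigma'$) with the $A^{*}(X_{\Sigma})_{\Q}$-module structure on $A^{*}(X_{\Sigma'})_{\Q}$ coming from pull-back $\pi^{*}$. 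On toric Cartier divisors this is transparent: a divisor $D = \sum a_{\tau}D_{\tau}$ on $X_{\Sigma}$ is encoded by its support function $h_{D}$, and $\pi^{*}D$ is the divisor on $X_{\Sigma'}$ whose support function is the same $h_{D}$, viewed on the finer fan. Passing through the presentation of the Chow ring by toric divisor classes (or, more robustly, working with the equivariant Chow rings $A^{*}_{T}(X_{\Sigma})_{\Q}\simeq R_{\Sigma}$ of Theorem \ref{th:chow} and quotienting by $M_{\Q}$) extends this identification multiplicatively.

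The main obstacle I expect is precisely this compatibility of module structures, i.e.\ verifying that pull-back of Chow classes corresponds, under the isomorphisms $\alpha$, to the refinement inclusion $\overline{R}_{\Sigma}\hookrightarrow \overline{R}_{\Sigma'}$. Once this is in place, the projection formula translates directly into the identity $\widetilde{\pi}(\iota(x)\cdot y)=x\cdot\widetilde{\pi}(y)$ for $x\in\overline{R}_{\Sigma}$, $y\in\overline{R}_{\Sigma'}$, closing the argument via the uniqueness statement of Theorem \ref{th:comb-push}.
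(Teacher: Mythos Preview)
The paper does not prove this theorem at all; it simply cites \cite[Theorem 2.3]{Brion1}. Your argument, by contrast, is a genuine self-contained proof, and it is the natural one: transport $\pi_*$ through the isomorphisms $\alpha$ and invoke the uniqueness clause of Theorem~\ref{th:comb-push}. The verification of the three properties is correct---birationality gives $\widetilde{\pi}(1)=1$, relative dimension zero plus the gradedness of $\alpha$ gives homogeneity, and the projection formula gives $\overline{R}_{\Sigma}$-linearity once you match pull-back with the refinement inclusion (which, as you say, is transparent on support functions and extends via the equivariant presentation).

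Two small remarks. First, you write ``Properties $(1)$ and $(3)$ of Theorem~\ref{th:comb-push} then force $\widetilde{\pi}=\pi_{\Sigma',\Sigma}$''; you need all three properties $(1)$--$(3)$ for uniqueness, and indeed you do verify all three, so this is just a slip of the pen. Second, the uniqueness statement in Theorem~\ref{th:comb-push} is formulated for $R_{\Sigma}$-linear maps $R_{\Sigma'}\to R_{\Sigma}$, not for their quotients $\overline{R}$. The cleanest way to close this gap is exactly the route you already sketch parenthetically: run the whole argument at the equivariant level, where $A^*_T(X_{\Sigma})_{\Q}\simeq R_{\Sigma}$ and the uniqueness applies verbatim, and then pass to the quotient by $M_{\Q}$ at the end. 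With that adjustment the proof is complete.
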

 Combining this with Theorem \ref{th:1}, we obtain the following corollary.    
     \begin{cor}\label{cor:commut}
Let $P_2 \geq P_1$ be two full-dimensional polytopes in $\mathcal{P}_{\sm}$. Then there exists a map $g \colon \Pi(P_2) \to \Pi(P_1)$ making the following diagram commute. 

\begin{figure}[h]
\begin{center}
    \begin{tikzpicture}
      \matrix[dmatrix] (m)
      {
        \Pi(P_2) & A^*\left(X_{\Sigma_{P_2}}\right)_{\Q}\\
       \Pi(P_1) & A^*\left(X_{\Sigma_{P_1}}\right)_{\Q} \\
      };
      \draw[->] (m-1-1) to node[above]{$\Theta_2$} (m-1-2);
      \draw[->] (m-2-1) to node[below]{$\Theta_1$} (m-2-2);
      \draw[->] (m-1-1) to node[left]{$g$} (m-2-1);
      \draw[->] (m-1-2) to node[right]{$\pi_*$} (m-2-2);
     \end{tikzpicture}
     \end{center}
     \end{figure}
Here, the maps $\Theta_i= \Theta_{P_i}$ denote the isomorpisms from Theorem \ref{th:iso}. Hence, we obtain an isomorphsim of $\Q$-vector spaces 
\[
\Theta \colon \varprojlim_{P \in \mathcal{P}_{\sm}} \Pi(P) \longrightarrow A^*\left(\mathfrak{X}^{\tor}\right)_{\Q},
\]
where the maps on the left hand side are given by the combinatorial push-forward morphisms $g$.
\end{cor}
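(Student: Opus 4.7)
The plan is to build $g$ by transporting the combinatorial push-forward on piecewise polynomial functions through the isomorphisms already established, and then pass to the inverse limit using cofinality of projective fans.

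First, by Theorem \ref{th:1}, for each $P \in \mathcal{P}_{\sm}$ of dimension $n$ there is an isomorphism $\varphi_P \colon \Pi(P) \xrightarrow{\sim} \overline{R}_{\Sigma_P}$, and by construction $\varphi_P = \alpha_{\Sigma_P} \circ \Theta_P$. Accordingly, I would define
\[
g \;\coloneqq\; \varphi_{P_1}^{-1} \circ \pi_{\Sigma_{P_2}, \Sigma_{P_1}} \circ \varphi_{P_2} \colon \Pi(P_2) \longrightarrow \Pi(P_1),
\]
where $\pi_{\Sigma_{P_2}, \Sigma_{P_1}} \colon \overline{R}_{\Sigma_{P_2}} \to \overline{R}_{\Sigma_{P_1}}$ is the combinatorial push-forward from Theorem \ref{th:comb-push}, passed to the quotient by the ideal generated by global linear forms.

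To check commutativity of the square, one simply composes the known relations. From $\Theta_i = \alpha_{\Sigma_{P_i}}^{-1} \circ \varphi_{P_i}$ and Theorem \ref{th:comm1} (which asserts $\pi_* \circ \alpha_{\Sigma_{P_2}}^{-1} = \alpha_{\Sigma_{P_1}}^{-1} \circ \pi_{\Sigma_{P_2}, \Sigma_{P_1}}$), we get
\[
\pi_* \circ \Theta_2 \;=\; \pi_* \circ \alpha_{\Sigma_{P_2}}^{-1} \circ \varphi_{P_2} \;=\; \alpha_{\Sigma_{P_1}}^{-1} \circ \pi_{\Sigma_{P_2},\Sigma_{P_1}} \circ \varphi_{P_2} \;=\; \Theta_1 \circ g,
\]
which is exactly the claimed commutativity. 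The functoriality $g_{P_3 \to P_1} = g_{P_2 \to P_1} \circ g_{P_3 \to P_2}$ for $P_3 \geq P_2 \geq P_1$ then follows from property (4) of Theorem \ref{th:comb-push}, so that the $g$'s indeed organize $\{\Pi(P)\}_{P \in \mathcal{P}_{\sm}}$ into an inverse system.

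For the final isomorphism, I would invoke Remark \ref{rem:proj-fan}: projective fans are cofinal in $W'$, hence smooth projective fans are cofinal in $W'_{\sm}$ (by toric resolution of singularities as in Remark \ref{rem:cofinal}). Since every smooth projective fan is the normal fan of some $P \in \mathcal{P}_{\sm}$, we obtain
\[
A^*\left(\mathfrak{X}^{\tor}\right)_{\Q} \;=\; \varprojlim_{\Sigma \in W'_{\sm}} A^*(X_{\Sigma})_{\Q} \;=\; \varprojlim_{P \in \mathcal{P}_{\sm}} A^*(X_{\Sigma_P})_{\Q}.
\]
The commutative squares above mean that the isomorphisms $\{\Theta_P\}_{P \in \mathcal{P}_{\sm}}$ form a morphism of inverse systems, which is an isomorphism at every level; passing to the inverse limit yields the desired isomorphism $\Theta$ of $\Q$-vector spaces. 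The only point requiring care—and the mildly delicate step—is the cofinality argument: one must confirm that replacing $W'_{\sm}$ by $\{\Sigma_P \mid P \in \mathcal{P}_{\sm}\}$ does not change the inverse limit, which is immediate from Remarks \ref{rem:cofinal} and \ref{rem:proj-fan} but is the one place where a naive reading could slip.
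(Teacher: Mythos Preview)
Your proposal is correct and follows exactly the paper's approach: the paper also defines $g = \varphi_{P_1}^{-1} \circ \pi_{\Sigma_{P_2},\Sigma_{P_1}} \circ \varphi_{P_2}$ via Theorem~\ref{th:1} and then simply asserts that this satisfies the desired property. In fact you supply more detail than the paper does, spelling out the commutativity check via Theorem~\ref{th:comm1}, the functoriality from property~(4), and the cofinality argument for the inverse limit.
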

\begin{proof}
Consider the map $g \colon \Pi(P_2) \to \Pi(P_1)$ defined on the distinguished set of generators \eqref{eq:gen} by
\[
[Q] \longmapsto \varphi_1^{-1} \circ \pi_{\Sigma_2, \Sigma_1} \circ \varphi_2 [Q],
\]
 where $\varphi_i = \varphi_{P_i}$ denotes the isomorphism from Theorem \ref{th:1}.
Then $g$ satisfies the desired property. 
\end{proof}

\section{The embedding}\label{sec:emb}
Let notations be as in Sections \ref{sec:not} and \ref{sec:conv-set}. The goal of this section is to relate the convex-set algebra $\mathcal{C}$ with the toric b-Chow group of the toric riemann--Zariski Space.

We start by defining a map 
\begin{align}\label{eq:uni}
\gamma \colon \mathcal{C} \longrightarrow \varprojlim_{P \in \mathcal{P}_{\sm}} \Pi(P)
\end{align}
of $\Q$-vector spaces.
We proceed in three steps. 

\textbf{Step 1} Fix $P \in \mathcal{P}_{\sm}$ of dimension $n$.  We define a morpism of $\Q$-vector spaces 
\[
\gamma_P \colon \mathcal{C} \longrightarrow \Pi(P)
\]
as follows. For a compact convex set $K \in \mathcal{K}$, consider the corresponding nef toric $b$-divisor $\D_K = \left(D_{K,\Sigma'}\right)_{\Sigma' \geq \Sigma_P}$ on $X_{\Sigma_P}$ (Remark \ref{rem:convex-b}). Now, let $P_1 \geq P$ in $\mathcal{P}_{\sm}$ be sufficiently large such that $D_{K,\Sigma_{P_1}}$ is nef on $X_{\Sigma_{P_1}}$. Then set 
\[
\gamma_P \left([K]\right) = g_1\left(\left[P_{D_{K,\Sigma_{P_1}}}\right]\right),
\]
where $g_1 \colon \Pi(P_1) \to \Pi(P)$ denotes the combinatorial push-forward from Corollary~\ref{cor:commut}. Note that the fact that $D_{K,\Sigma_{P_1}}$ is nef on $X_{\Sigma_{P_1}}$ implies that the class of its corresponding polytope $P_{D_{K,\Sigma_{P_1}}}$ is indeed an element of $\Pi(P_1)$. 

To see that this does not depend on the choice of $P_1$, let $P_2 \geq P$ be another polytope in $\mathcal{P}_{\sm}$ such that $D_{K, \Sigma_{P_2}}$ is also nef in $X_{\Sigma_{P_2}}$. Consider a common refinement $P_3 \geq P_1$, $P_3 \geq P_2$ in $\mathcal{P}_{\sm}$. Then, it follows from item (4) in Theorem \ref{th:comb-push} that the following diagram is commutative. 

\begin{center}
    \begin{tikzpicture}
      \matrix[dmatrix] (m)
      {
        & \Pi(P_3) & \\
       \Pi(P_1) & & \Pi(P_2) \\
       & \Pi(P) & \\
      };
      \draw[->] (m-1-2) to node[left]{$g_4$} (m-2-1);
      \draw[->] (m-1-2) to node[right]{$g_5$} (m-2-3);
      \draw[->] (m-2-1) to node[left]{$g_1$} (m-3-2);
      \draw[->] (m-2-3) to node[right]{$g_2$} (m-3-2);
      \draw[->] (m-1-2) to node[right]{$g_3$} (m-3-2);

     \end{tikzpicture}
   \end{center}
By our choice of $P_1$ and $P_2$ and the fact that $\D_K$ is a b-divisor, we have that the push-forward of $D_{K, \Sigma_{P_3}}$ along both toric birational morphisms $X_{\Sigma_{P_3}} \to  X_{\Sigma_{P_1}}$ and $X_{\Sigma_{P_3}} \to  X_{\Sigma_{P_2}}$ are $D_{K, \Sigma_{P_1}}$ and $D_{K, \Sigma_{P_2}}$, respectively, which are again nef. Hence, on the combinatorial side, we get 
\[
g_4\left(\left[P_{D_{K, \Sigma_{P_3}}}\right]\right) = \left[P_{D_{K, \Sigma_{P_1}}}\right] \text{ and } g_5\left(\left[P_{D_{K, \Sigma_{P_3}}}\right]\right) = \left[P_{D_{K, \Sigma_{P_2}}}\right].
\]
 Thus, we obtain
\[
 g_1\left(\left[P_{D_{K, \Sigma_{P_1}}}\right]\right) 
 = g_1 \circ g_4\left(\left[P_{D_{K, \Sigma_{P_3}}}\right]\right)
 = g_3\left(\left[P_{D_{K, \Sigma_{P_3}}}\right]\right) 
 = g_2 \circ g_5\left(\left[P_{D_{K, \Sigma_{P_3}}}\right]\right)
 = g_2\left(\left[P_{D_{K, \Sigma_{P_2}}}\right]\right),
 \]
 as we wanted to show.
  
  Finally, for two compact convex sets $K_1, K_2 \in \mathcal{K}$, and for any rational number $t \in \Q$, we set
  \[
  \gamma_P\left([K_1] + t[K_2]\right) = \gamma_P ([K_1]) + t \gamma_P ([K_2]).
  \]
Moreover, note that $\gamma_P$ satisfies
\[
\gamma_P(\left[K_1\right]) + \gamma_P(\left[K_2\right]) = \gamma_P(\left[K_1 \cup K_2\right]) + \gamma_P(\left[K_1 \cap K_2 \right]),
\]
whenever $K_1 \cup K_2 \in \mathcal{K}$, and also 
\[
\gamma_P(\left[K+t\right]) + \gamma_P([K]).
\]
Indeed, this follows since it is true for the combinatorial push-forward.

Thus, we obtain a homomorphsim
\[
\gamma_P \colon \mathcal{C} \longrightarrow \Pi(P)
\]
of $\Q$-vector spaces. 

\textbf{Step 2} Let $P'' \geq P'$ in $\mathcal{P}_{\sm}$ of dimension $n$. We will see that the following diagram commutes. 
\begin{figure}[H]
\begin{center}
    \begin{tikzpicture}
      \matrix[dmatrix] (m)
      {
        & \mathcal{C} & \\
       \Pi(P'') & & \Pi(P') \\
      };
      \draw[->] (m-1-2) to node[left]{$\gamma_{P''}$} (m-2-1);
      \draw[->] (m-1-2) to node[right]{$\gamma_{P'}$} (m-2-3);
      \draw[->] (m-2-1) to node[above]{$g$} (m-2-3);
    
     \end{tikzpicture}
   \end{center}
  \end{figure}
It suffices to show this for generators $[K]$, $K \in \mathcal{K}$. Let $[K] \in \mathcal{C}$ such a generator and let $\D_K$ be the corresponding nef toric $b$-divisor. Choose a polytope $P''' \geq P'' \geq P'$ in $\mathcal{P}_{\sm}$ such that $D_{K, \Sigma_{P'''}}$ is nef on $X_{\Sigma_{P'''}}$. Consider the following diagram. 
 \begin{center}
    \begin{tikzpicture}
      \matrix[dmatrix] (m)
      {
        & \mathcal{C} & \\
       \Pi(P'') & & \Pi(P') \\
       & \Pi(P''') & \\
      };
      \draw[->] (m-1-2) to node[left]{$\gamma_{P''}$} (m-2-1);
      \draw[->] (m-1-2) to node[right]{$\gamma_{P'}$} (m-2-3);
      \draw[->] (m-2-1) to node[above]{$g$} (m-2-3);
      \draw[->] (m-3-2) to node[left]{$g''$} (m-2-1);
      \draw[->] (m-3-2) to node[right]{$g'$} (m-2-3);
    
     \end{tikzpicture}
   \end{center} 
We already know that the triangle below commutes. Hence, we get
\[
g \circ \gamma_{P''}([K]) = g \circ g''\left(\left[P_{D_{K,\Sigma_{P'''}}}\right]\right) \\
= g'\left(\left[P_{D_{K,\Sigma_{P'''}}}\right]\right) = \gamma_{P'}([K]),
\]
as we wanted to show. 
  
\textbf{ Step 3} By the universal property of the inverse limit, there exists a (unique) homomorphism of $\Q$-vector spaces 
\[
\gamma \colon \mathcal{C} \to \varprojlim_{P \in \mathcal{P}_{\sm}} \Pi(P),
\]
making the following diagram commute whenever $P_2 \geq P_1 \in \mathcal{P}_{\sm}$ are of dimension $n$.
\begin{center}
    \begin{tikzpicture}
      \matrix[dmatrix] (m)
      {
        & \mathcal{C} & \\
        & \varprojlim_{P \in \mathcal{P}_{\sm}} \Pi(P) & \\
       \Pi(P_2) & & \Pi(P_1) \\
      };
      \draw[->,bend right] (m-1-2) to node[left]{$\gamma_{P_2}$} (m-3-1);
      \draw[->, bend left] (m-1-2) to node[right]{$\gamma_{P_1}$} (m-3-3);
      \draw[->] (m-3-1) to node[above]{$g$} (m-3-3);
      \draw[->] (m-1-2) to node[right]{$\gamma$} (m-2-2);
      \draw[->] (m-2-2) to node[left]{$\pi_{P_2}$} (m-3-1);
       \draw[->] (m-2-2) to node[right]{$\pi_{P_1}$} (m-3-3);
    
     \end{tikzpicture}
   \end{center} 
Here, the maps $\pi_{P_i}$ denote the canonical projection.

\begin{theorem}\label{th:injection}
The map $\gamma$ constructed above is injective. Hence, the map 
\begin{align}
\iota \colon \mathcal{C} \hookrightarrow A^*\left(\mathfrak{X}^{\tor}\right)_{\Q},
\end{align}
defined by $\iota \coloneqq \Theta \circ \gamma$ is an injective homomorphism of $\Q$-vector spaces such that
\[
[K] \longmapsto \exp(\D_K)
\]
for $K \in \mathcal{K}$. Here, $\D_K$ is the nef toric b-divisor corresponding to the convex set $K$ and $\Theta$ is the isomorphism from Corollary~\ref{cor:commut}
\end{theorem}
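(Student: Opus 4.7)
My plan has two components: first, verify the explicit formula $\iota([K])=\exp(\D_K)$ by unwinding the construction, and second, prove injectivity of $\gamma$ via Hausdorff approximation by polytopes. Injectivity of $\iota=\Theta\circ\gamma$ then follows from the fact that $\Theta$ is an isomorphism (Corollary \ref{cor:commut}).

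For the formula, I fix $P\in\mathcal{P}_{\sm}$ and pick $P_1\geq P$ in $\mathcal{P}_{\sm}$ on which $D_{K,\Sigma_{P_1}}$ is nef. By construction $\gamma_P([K])=g_1\bigl([P_{D_{K,\Sigma_{P_1}}}]\bigr)$. Corollary \ref{cor:commut} identifies $\Theta_P\circ g_1=\pi_*\circ\Theta_{P_1}$, while Theorem \ref{th:iso} gives $\Theta_{P_1}\bigl([P_{D_{K,\Sigma_{P_1}}}]\bigr)=\exp(D_{K,\Sigma_{P_1}})$. Composing these, the level-$P$ component of $\iota([K])$ equals $\pi_*\bigl(\exp(D_{K,\Sigma_{P_1}})\bigr)$, which is exactly the level-$P$ component of $\exp(\D_K)=\sum_{r=0}^{n}\D_K^r/r!$ because each power $\D_K^r$ is computed on a level where the divisor is nef and then pushed forward.

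For injectivity, suppose $\gamma(x)=0$. The multiplication rule $[K_1]\cdot[K_2]=[K_1+K_2]$ lets me write $x=\sum_i q_i[K_i]$ as a finite $\Q$-linear combination of generators. By a cofinality argument I find $P_0\in\mathcal{P}_{\sm}$ on which every $D_{K_i,\Sigma_{P_0}}$ is simultaneously nef, so for any $P\geq P_0$ I may take $P_1=P$ in the definition of $\gamma_P$, and the hypothesis becomes $\sum_i q_i[K_{i,P}]=0$ in $\Pi(P)\subset\mathcal{C}$, where $K_{i,P}\coloneqq P_{D_{K_i,\Sigma_P}}=\{m\in M_{\R}\mid\langle m,v_\tau\rangle\geq h_{K_i}(v_\tau)\text{ for every }\tau\in\Sigma_P(1)\}$. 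As $P$ runs through a cofinal subfamily of $\mathcal{P}_{\sm}$ whose fans $\Sigma_P$ have rays growing dense in $N_{\R}$, I have $K_{i,P}\to K_i$ in the Hausdorff metric $d_H$. The continuity of Minkowski addition and rational translation with respect to $d_H$ (property (2) after Definition \ref{def:convex-set}) induces a metric on $\mathcal{C}$ for which $[K_{i,P}]\to[K_i]$, whence $x=\lim_P\sum_i q_i[K_{i,P}]=0$.

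The hard part will be the final step: making the Hausdorff convergence $K_{i,P}\to K_i$ in $\mathcal{K}$ actually yield convergence $[K_{i,P}]\to[K_i]$ in $\mathcal{C}$, and ensuring that finite $\Q$-linear combinations are continuous with respect to this induced metric. This depends crucially on the nilpotency relation $([K]-1)^{n+1}=0$ from item (2) of Section \ref{sec:conv-set}, which confines each class to a finite-dimensional truncated piece and makes the metric on $\mathcal{C}$ well-behaved; justifying this carefully, together with the cofinality claim yielding $P_0$, is where the real work lies.
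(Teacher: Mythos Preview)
Your approach differs substantially from the paper's. The paper's proof is three lines: it asserts without justification that ``it suffices to show injectivity for generators of the form $[K]$,'' and then checks only that $\gamma([K_1])=\gamma([K_2])$ forces $[K_1]=[K_2]$, by recovering the full nef toric b-divisor $\D_{K_i}$ from the system $\bigl(\gamma_P([K_i])\bigr)_P$ and invoking the bijection of Remark~\ref{rem:convex-b} between $\mathcal{K}$ modulo translation and nef toric b-divisors. No argument is offered for why separating single generators suffices for injectivity of a $\Q$-linear map on all of $\mathcal{C}$.

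You instead take a general kernel element $x=\sum_i q_i[K_i]$, restrict to those $P$ on which every $D_{K_i,\Sigma_P}$ is nef so that $\gamma_P(x)=\sum_i q_i[K_{i,P}]=0$ holds literally inside $\Pi(P)\subset\mathcal{C}$, and then pass to the Hausdorff limit $K_{i,P}\to K_i$ to conclude $x=0$. This genuinely engages with arbitrary linear combinations rather than only pairs of generators. The cost is a heavy dependence on the metric structure on $\mathcal{C}$ sketched in property~(2) of Section~\ref{sec:conv-set}, which the paper states only informally, and on the cofinality claim that finitely many nef b-divisors are simultaneously nef on a cofinal family of fans; you are right to flag both as the places where the real work sits. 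In short, the paper's argument buys brevity by leaving the reduction to generators unexplained, while yours is longer but confronts the full linear kernel directly. Your verification of the formula $\iota([K])=\exp(\D_K)$ via $\Theta_P\circ g_1=\pi_*\circ\Theta_{P_1}$ is also more explicit than anything the paper writes out.
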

\begin{proof}
Let notations be as in the construction of the map $\gamma$. It suffices to show injectivity for generators of the form $[K]$, $K \in \mathcal{K}$. Let $[K_1], [K_2]$ be two such generators and suppose that $\gamma([K_1]) = \gamma([K_2])$. Then, for all $P \in \mathcal{P}_{\sm}$, we have that
\[
\gamma_{P}([K_1]) = \pi_{P}([K_1]) = \pi_P([K_2]) = \gamma_P([K_2]).
\]
It follows that 
\[
\left(D_{\gamma_{P}[K_1]}\right)_{P \in \mathcal{P}_{\sm}} \quad \text{and} \quad \left(D_{\gamma_{P}[K_2]}\right)_{P \in \mathcal{P}_{\sm}}
\]
define the same nef toric b-divisor. Thus, since there is a bijective correspondence between convex sets in $\mathcal{K}$, modulo translations, and nef toric b-divisors (Remark \ref{rem:convex-b}), we conclude that $[K_1] = [K_2]$. 
\end{proof}
We endow $\iota(\mathcal{C}) \subset A^*\left(\mathfrak{X}^{\tor}\right)_{\Q}$ with the $\Q$-algebra structure induced by $\mathcal{C}$. Then we obtain the following corollary. 
\begin{cor}\label{cor:inj}
\begin{enumerate}
\item The map $\iota \colon \mathcal{C} \to \iota(\mathcal{C})$ is an isomorphism of $\Q$-graded algebras. Under this isomorphism, the class of a nef toric b-divisor $\D$ is identified with $\log\left([K_{\D}]\right)$, where $K_{\D} \in \mathcal{K}$ denotes its corresponding compact convex set. 
\item The restriction to the polytope algebra coincides with the isomorphism from \ref{eq:poly-iso}.
\end{enumerate}
\end{cor}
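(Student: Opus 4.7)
The plan is to assemble the corollary from Theorem \ref{th:injection} together with a handful of formal computations; no new geometric input is required beyond what has been developed in earlier sections. By construction, the $\Q$-algebra structure on $\iota(\mathcal{C})$ is transported from $\mathcal{C}$ via $\iota$, and Theorem \ref{th:injection} guarantees bijectivity onto this image. Hence $\iota$ is tautologically a ring isomorphism onto $\iota(\mathcal{C})$, and the real content of part~(1) is the compatibility with the grading and the identification $\log([K_{\D}]) \mapsto \D$; part~(2) is then essentially a bookkeeping check tying $\iota|_{\Pi}$ to the Fulton--Sturmfels isomorphism.

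To handle the grading, I will use that each graded component $\mathcal{C}_{\ell}$ is $\Q$-spanned by powers $(\log[K])^{\ell}$ for $K \in \mathcal{K}$. The starting identity, provided by Theorem \ref{th:injection}, reads $\iota([K]) = \exp(\D_K)$. Since $A^{\ell}(\mathfrak{X}^{\tor})_{\Q} = 0$ for $\ell > n$ by Definition~\ref{def:tor-b-chow}, the b-class $\D_K$ is nilpotent of order at most $n+1$, so both power series defining $\exp$ and $\log$ truncate to finite sums in the relevant arguments. Applying the ring homomorphism $\iota$ to the defining formula \eqref{eq:log} and invoking the formal identity $\log \circ \exp = \operatorname{id}$ for a commutative ring with sufficiently high nilpotents then yields
\[
\iota\bigl(\log[K]\bigr) = \log\bigl(\exp(\D_K)\bigr) = \D_K \in A^1\left(\mathfrak{X}^{\tor}\right)_{\Q}.
\]
Multiplicativity of $\iota$ now gives $\iota\bigl((\log[K])^{\ell}\bigr) = \D_K^{\ell} \in A^{\ell}(\mathfrak{X}^{\tor})_{\Q}$, which establishes $\iota(\mathcal{C}_{\ell}) \subset A^{\ell}(\mathfrak{X}^{\tor})_{\Q}$ and hence the graded compatibility; the identification statement $\log([K_{\D}]) \mapsto \D$ is the specialisation $K = K_{\D}$.

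For part~(2), I will unfold the definition of $\gamma$ on a generator $[Q] \in \Pi$. Since $Q$ is a polytope, the associated nef toric b-divisor $\D_Q$ is Cartier, determined at the level $\Sigma_Q$. Fixing $P \in \mathcal{P}_{\sm}$ and choosing an auxiliary $P_1 \geq P,\,Q$ in $\mathcal{P}_{\sm}$, one has $D_{Q,\Sigma_{P_1}} = \pi_Q^{*}D_Q$ and $P_{D_{Q,\Sigma_{P_1}}} = Q$, so that $\gamma_P([Q]) = g_1([Q])$ in the notation of Step~1. Translating to the Chow side through Corollary~\ref{cor:commut} and applying the projection formula $\pi_{*}\bigl(\exp(\pi^{*}D)\bigr) = \exp(D)$, the class $\iota([Q])$ is seen to lie in $A_{\operatorname{Ca}}^{*}(\mathfrak{X}^{\tor})_{\Q}$ and to coincide at level $\Sigma_Q$ with $\exp(D_Q)$; this is precisely the value of the Fulton--Sturmfels isomorphism $f$ of \eqref{eq:poly-iso}.

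The expected main obstacle is a careful bookkeeping of the two nilpotence orders involved: the relation $([K] - 1)^{n+1} = 0$ in $\mathcal{C}$ used to define $\log$ in \eqref{eq:log} must be matched with the vanishing $A^{\ell}(\mathfrak{X}^{\tor})_{\Q} = 0$ for $\ell > n$, so that the formal identity $\log \circ \exp = \operatorname{id}$ applies consistently on both sides of $\iota$. Here Theorem \ref{th:comb-push}\,(3) is essential, since homogeneity of the combinatorial push-forwards guarantees that the grading on $\varprojlim_{P} \Pi(P)$ pulled over to $A^{*}(\mathfrak{X}^{\tor})_{\Q}$ via $\Theta$ is the natural one. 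Once this compatibility is in place, the rest is a direct invocation of Theorem \ref{th:injection} and Corollary \ref{cor:commut}.
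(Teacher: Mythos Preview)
Your proposal is correct and follows essentially the same approach as the paper, which proves the corollary in a single sentence by appealing to the definition of $\iota$ and the transported algebra structure on $\iota(\mathcal{C})$. You have simply supplied the details the paper omits --- in particular the formal computation $\iota(\log[K]) = \log(\exp(\D_K)) = \D_K$ and the unwinding of $\gamma_P$ on polytopal generators --- but the underlying argument is the same tautological one.
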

\begin{proof}
This follows from the definition of the map $\iota$ and the algebra structure defined on $\iota(\mathcal{C})$.
\end{proof}
We also obtain the following structure result for $\mathcal{C}$.

\begin{cor}\label{cor:inj2} 
The convex-set algebra $\mathcal{C}$ has the structure of an infinite dimensional commutative graded $\Q$-algebra
\[
\mathcal{C} = \mathcal{C}_0 \oplus \mathcal{C}_1 \oplus \dotsc \oplus \mathcal{C}_n.
\] 
Moreover, we have 
\begin{enumerate}
\item $\mathcal{C}_0 \simeq \Q$ and $\mathcal{C}_n \subset \R$.
\item $\mathcal{C}_i \mathcal{C}_j \subset \mathcal{C}_{i +j}$ for all $1 \leq i, j \leq n$ and $i + j\leq n$.
\item $\mathcal{C}$ is generated by $\mathcal{C}_1$ as a $\Q$-algebra.
\end{enumerate}
\end{cor}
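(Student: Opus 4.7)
The plan is to derive each item of the corollary from the injective morphism of $\Q$-graded algebras $\iota \colon \mathcal{C} \to \iota(\mathcal{C}) \subset A^*(\mathfrak{X}^{\tor})_{\Q}$ provided by Corollary~\ref{cor:inj}, combined with the structural facts about the toric b-Chow group recorded in Definition~\ref{def:tor-b-chow}. The grading $\mathcal{C} = \bigoplus_{\ell \geq 0}\mathcal{C}_\ell$ (with $\mathcal{C}_\ell$ spanned by the $(\log[K])^\ell$) and the commutativity of $\mathcal{C}$ (inherited from commutativity of the Minkowski sum) are already in place from Section~\ref{sec:conv-set}, so the remaining work is to locate the nontrivial graded pieces, compute the two extreme ones, and read off the generation statement.

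First I would cut the grading off at $n$. Since $\iota$ is injective and graded, $\mathcal{C}_\ell \hookrightarrow A^\ell(\mathfrak{X}^{\tor})_{\Q}$, and the latter vanishes for $\ell > n$: one reduces to projective toric models, cofinal by Remark~\ref{rem:proj-fan}, whose Chow groups are concentrated in degrees $0,\ldots,n$ (this is exactly the observation embedded in Definition~\ref{def:tor-b-chow}). This yields $\mathcal{C}_\ell = 0$ for $\ell > n$ and hence the finite decomposition $\mathcal{C} = \mathcal{C}_0 \oplus \cdots \oplus \mathcal{C}_n$. Infinite-dimensionality is then inherited from $\Pi \subset \mathcal{C}$ and the known infinite-dimensionality of $\Pi$.

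Next I would pin down the two extreme components. By construction $\mathcal{C}_0$ is $\Q$-spanned by $(\log[K])^0 = 1$, and since $\iota(1) = 1 \neq 0$ in $A^0$, injectivity gives $\mathcal{C}_0 \simeq \Q$. For $\mathcal{C}_n$, the degree map identifies $A^n(X_\Sigma)_{\Q}$ with $\Q$ for every smooth complete toric variety of dimension $n$, and proper push-forward preserves the degree of a top-dimensional cycle class, so the transition maps in the inverse system $\varprojlim_{\Sigma} A^n(X_\Sigma)_{\Q}$ all reduce to the identity on $\Q$. Hence $A^n(\mathfrak{X}^{\tor})_{\Q} \simeq \Q$, and composing the graded injection $\mathcal{C}_n \hookrightarrow A^n(\mathfrak{X}^{\tor})_{\Q}$ with the tautological inclusion $\Q \subset \R$ places $\mathcal{C}_n$ inside $\R$; on pure-power generators this embedding sends $\tfrac{1}{n!}(\log[K])^n$ to $\vol(K)$ via Theorem~\ref{the:volume}.

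Finally, the containment $\mathcal{C}_i \mathcal{C}_j \subset \mathcal{C}_{i+j}$ for $i+j \leq n$ is automatic from $\mathcal{C}$ being a graded algebra with the degree cut-off above. For the generation statement, each generator $(\log[K])^\ell$ of $\mathcal{C}_\ell$ is tautologically the $\ell$-fold product of $\log[K] \in \mathcal{C}_1$ inside $\mathcal{C}$, so $\mathcal{C}_\ell$ lies in the $\Q$-subalgebra generated by $\mathcal{C}_1$, and summing over $\ell$ gives the claim. I do not foresee a substantial obstacle: the entire corollary is essentially a dictionary of properties already encoded in the graded injection of Corollary~\ref{cor:inj}, and the only mildly delicate point is the identification $A^n(\mathfrak{X}^{\tor})_{\Q} \simeq \Q$ via the degree map that places $\mathcal{C}_n$ inside $\R$.
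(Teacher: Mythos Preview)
Your proposal is correct and follows essentially the same route as the paper: use the graded injection $\iota$ into $A^{*}(\mathfrak{X}^{\tor})_{\Q}$ to truncate the grading at $n$, read off $\mathcal{C}_0 \simeq \Q$ from the definition, embed $\mathcal{C}_n$ via $A^{n}(\mathfrak{X}^{\tor})_{\Q}$, and obtain generation by $\mathcal{C}_1$ from $[K]=\exp(\log[K])$. The only difference is cosmetic: the paper writes $\mathcal{C}_n \subset \varprojlim_{\Sigma}\Q = \R$ in one stroke, whereas you (more carefully) identify the inverse limit with $\Q$ via the degree map and then include $\Q \subset \R$.
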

\begin{proof}
Recall that $\mathcal{C} = \oplus_{\ell = 0}^{\infty}\mathcal{C}_{\ell}$ is a graded $\Q$-algebra, where the $\ell$'th graded piece is given by all elements of the form $\left(\log[K]\right)^{\ell}$ for $K \in \mathcal{K}$. 

Now, recall that the b-Chow group $A^*\left(\mathfrak{X}^{\tor}\right)_{\Q}$ decomposes as a direct sum 
\[
A^*\left(\mathfrak{X}^{\tor}\right)_{\Q} = \bigoplus_{\ell}^nA^{\ell}\left(\mathfrak{X}^{\tor}\right)_{\Q},
\]
where $A^{\ell}\left(\mathfrak{X}^{\tor}\right)_{\Q} = \varprojlim_{\Sigma \in W'_{\sm}}A^{\ell}\left(X_{\Sigma}\right)$. Then, by definition, the map $\iota$ preserves the grading in the sense that 
\[
\iota\left(\mathcal{C}_{\ell}\right) \subset A^{\ell}\left(\mathfrak{X}^{\tor}\right)_{\Q}
\]
for all $0 \leq \ell \leq n$.
It follows that the graded components $\mathcal{C}_{\ell}$ vanish in all degrees $\ell > n$. Moreover, we have $\mathcal{C}_0 \simeq \Q$ (by definition of $\mathcal{C}$) and $\mathcal{C}_n \subset \varprojlim_{\Sigma}\Q = \R$. Item $(2)$ is clear and $(3)$ follows since $[K] = \exp(\log[K])$ for any $K \in \mathcal{K}$. 
\end{proof}
\begin{rem} By Corollaries \ref{cor:inj} and \ref{cor:inj2},  the convex-set algebra can be viewed as a ring for an intersection theory of sufficiently positive toric b-classes on the toric b-Chow group. 
\end{rem}


\begin{Def}
Let $x \in \mathcal{C}_n$. The \emph{degree} of $x$ is the real number $\iota(x) \in \R$. We denote it by $\deg(x)$. 
\end{Def}
The following lemma states that in the case of nef toric b-divisors, the top intersection product on $\iota(\mathcal{C})$ induced by the multiplication on $\mathcal{C}$ coincides with the top intersection product defined in \cite{botero} (see \ref{sec:b-divisors}).
\begin{lemma}\label{lem:volume}
Let $\D_1, \dots, \D_n$ be nef toric b-divisors on $X$ and let $K_1, \dotsc, K_n$ be the corresponding compact convex sets. For $i = 1, \dotsc, n$, set $k_i =  \log\left([K_i]\right)$. Then 
\[
\deg(k_1\dotsm k_n) = \D_1 \dotsm \D_n = \MV \left(K_1, \dotsc, K_n \right).
\]
\end{lemma}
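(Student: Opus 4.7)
The second equality $\D_1 \dotsm \D_n = \MV(K_1, \dotsc, K_n)$ is exactly the content of Theorem~\ref{the:volume}, so the only nontrivial content is the identity $\deg(k_1 \dotsm k_n) = \MV(K_1, \dotsc, K_n)$. The plan is to first settle the polytope case through the ring isomorphism $f$ of \eqref{eq:poly-iso} and classical toric intersection theory, and then pass to general compact convex sets by polytope approximation in the Hausdorff metric.

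For the polytope case, assume $K_i = P_i \in \mathcal{P}$. By directedness of $\mathcal{P}_{\sm}$, pick $P \in \mathcal{P}_{\sm}$ whose normal fan refines all the $\Sigma_{P_i}$, so that each $D_{P_i}$ is a nef toric divisor on the smooth complete $n$-dimensional toric variety $X_{\Sigma_P}$. By Corollary~\ref{cor:inj}(2), $\iota$ restricted to $\Pi$ is the ring isomorphism $f$ of \eqref{eq:poly-iso}, hence $\iota(\log[P_i]) = \D_{P_i}$ is the Cartier b-class of $D_{P_i}$, and multiplicativity yields
\[
\iota\bigl(\log[P_1] \dotsm \log[P_n]\bigr) = D_{P_1} \dotsm D_{P_n} \in A^n(X_{\Sigma_P})_{\Q} \simeq \Q.
\]
Classical toric intersection theory identifies this top intersection number with $\MV(P_1, \dotsc, P_n)$, proving the lemma for polytopes.

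For the general case, by \cite[Theorem 2.4.15]{BM} each $K_i$ is a Hausdorff limit of rational polytopes $P_i^{(j)}$. As recalled in the discussion around \eqref{eq:nilpotent}, $d_H$ induces a metric on $\mathcal{C}$ under which Minkowski sum, and hence multiplication of classes, is continuous. Since $([K]-1)^{n+1} = 0$, the logarithm is a polynomial in $[K]-1$, so $k_i = \lim_j \log[P_i^{(j)}]$ in $\mathcal{C}_1$ and
\[
k_1 \dotsm k_n = \lim_j \log[P_1^{(j)}] \dotsm \log[P_n^{(j)}] \quad \text{in } \mathcal{C}_n.
\]
The polytope case gives $\deg\bigl(\log[P_1^{(j)}] \dotsm \log[P_n^{(j)}]\bigr) = \MV(P_1^{(j)}, \dotsc, P_n^{(j)})$, and continuity of mixed volume in the Hausdorff metric (a standard fact from convex geometry) shows that the right-hand side converges to $\MV(K_1, \dotsc, K_n)$. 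Combining with Theorem~\ref{the:volume} completes the proof.

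The delicate point is to justify the continuity of $\deg \colon \mathcal{C}_n \to \R$ with respect to the induced metric on $\mathcal{C}$, which is what permits the final exchange of limit and degree. In effect this amounts to continuity of the top-degree component of $\iota$, and ultimately reduces to continuity of mixed volume; a secondary technical point is arranging the polytope approximants $P_i^{(j)}$ to be rational and to admit common smooth refinements in $\mathcal{P}_{\sm}$, so that the polytope case applies uniformly in $j$.
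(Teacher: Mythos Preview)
The paper's proof is a one-line tautology from the definitions already in place: since the algebra structure on $\iota(\mathcal{C})$ is the one transported from $\mathcal{C}$, the map $\iota$ is multiplicative by fiat, and Corollary~\ref{cor:inj} gives $\iota(k_i)=\D_i$; hence
\[
\deg(k_1\dotsm k_n)=\iota(k_1\dotsm k_n)=\iota(k_1)\dotsm\iota(k_n)=\D_1\dotsm\D_n=\MV(K_1,\dotsc,K_n),
\]
the last equality being Theorem~\ref{the:volume}. No splitting into a polytope case and no Hausdorff approximation is invoked.

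Your route via approximation is a genuine detour, and the step you yourself flag as ``delicate'' is in fact a gap. To pass from polytopes to general $K_i$ you need $\deg\colon\mathcal{C}_n\to\R$ to be continuous for the metric induced by $d_H$, and you justify this by saying it ``ultimately reduces to continuity of mixed volume.'' But that reduction is circular: continuity of $\MV$ only transfers to $\deg$ once you already know $\deg=\MV$, which is exactly what you are trying to prove. An independent argument for continuity of $\iota|_{\mathcal{C}_n}$ would have to go through the construction of $\gamma_P$ and the combinatorial push-forwards $g$, and it is not evident that the resulting map is continuous at any fixed level (the incarnations $D_{K,\Sigma}$ depend on $K$ only through the values of $h_K$ at the finitely many rays of $\Sigma$, so level-wise continuity in $d_H$ is plausible, but the interaction with the push-forward $g_1$ and the passage to the degree-$n$ component is not addressed). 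Your polytope step is correct and essentially recovers the paper's argument in the Cartier case via Corollary~\ref{cor:inj}(2); the point is that the paper's definitions are arranged so that the general case requires no extra work beyond this.
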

\begin{proof}
We have $\iota(k_i) = \D_i$ for $i = 1, \dotsc, n$. Then
\[
\deg(k_1\dotsm k_n) = \iota(k_1 \dotsm k_n) = \iota(k_1) \dotsm \iota(k_n) = \D_1 \dotsm \D_n = \MV \left(K_1, \dotsc, K_n \right),
\]
where the last equality is Theorem \ref{the:volume}.
\end{proof} 
\begin{rem}
The previous lemma generalizes the results in \cite[Section 5.3]{Brion}, which show that the top intersection number of top-degree elements in the polytope algebra are given as mixed volumes of polyhedra.
\end{rem}

\section{Hodge type inequalities}\label{sec:applications}
Let notations be as in Sections \ref{sec:not} and \ref{sec:conv-set}. As an application of the results in the previous section, we show that some Hodge type inequalities are satisfied for the convex set algebra $\mathcal{C}$.


We start with the following Alexandrov--Fenchel type inequality. 

\begin{theorem}\label{th:alexandrov}
Let $K_1, \dotsc, K_n$ be compact convex sets in $\mathcal{K}$ and set $k_i = \log([K_i]) \in \mathcal{C}$ for $i = 1, \dotsc, n$. Then the following inequality holds true. 
\begin{align}\label{eq:alex-fen}
\deg(k_1 \dotsm k_n)^2 \geq \deg(k_1\cdot k_1 \cdot k_3 \dotsm k_n)\deg(k_2\cdot k_2 \cdot k_3 \dotsm k_n). 
\end{align}
In particular, for $n=2$, we get 
\[
\deg\left(k_1\cdot k_2\right)^2 \geq \deg\left(k_1^2\right) \deg\left(k_2^2\right) .
\]

\end{theorem}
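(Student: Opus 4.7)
The plan is to reduce \eqref{eq:alex-fen} to the classical Alexandrov--Fenchel inequality for mixed volumes of compact convex bodies in $\R^n$, using Lemma~\ref{lem:volume} as the dictionary between multiplication in $\mathcal{C}$ and mixed volumes of convex sets.

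First, I would apply Lemma~\ref{lem:volume} once to each of the three degrees appearing in \eqref{eq:alex-fen}. The lemma asserts that for any tuple of compact convex sets $L_1, \dotsc, L_n \in \mathcal{K}$ with associated log-classes $\ell_i = \log([L_i])$, one has $\deg(\ell_1 \dotsm \ell_n) = \MV(L_1, \dotsc, L_n)$. Applied to the tuples $(K_1, K_2, K_3, \dotsc, K_n)$, $(K_1, K_1, K_3, \dotsc, K_n)$, and $(K_2, K_2, K_3, \dotsc, K_n)$, this rewrites \eqref{eq:alex-fen} as
\[
\MV(K_1, K_2, K_3, \dotsc, K_n)^2 \geq \MV(K_1, K_1, K_3, \dotsc, K_n) \cdot \MV(K_2, K_2, K_3, \dotsc, K_n).
\]

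This is precisely the classical Alexandrov--Fenchel inequality for the mixed volumes of compact convex bodies $K_1, \dotsc, K_n \subset M_{\R} \simeq \R^n$. Since each $K_i \in \mathcal{K}$ is by definition a compact convex subset of $M_{\R}$, the classical inequality applies verbatim and delivers the desired bound. For the stated $n=2$ specialization, the general inequality reduces to $\MV(K_1, K_2)^2 \geq \MV(K_1, K_1) \cdot \MV(K_2, K_2)$, which, translated back through Lemma~\ref{lem:volume}, is exactly $\deg(k_1 \cdot k_2)^2 \geq \deg(k_1^2) \deg(k_2^2)$.

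The main obstacle is conceptual rather than technical: the proof invokes the classical Alexandrov--Fenchel inequality as a black box. All the genuine work has already been carried out in the previous sections, in building the convex-set algebra $\mathcal{C}$, embedding it into the toric b-Chow group (Theorem~\ref{th:injection}, Corollary~\ref{cor:inj}), and identifying top degrees with mixed volumes (Lemma~\ref{lem:volume}); once these are in place, the proof of Theorem~\ref{th:alexandrov} is a one-line reduction.
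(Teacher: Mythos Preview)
Your proposal is correct and follows exactly the paper's own argument: translate each degree via Lemma~\ref{lem:volume} into a mixed volume and then invoke the classical Alexandrov--Fenchel inequality for convex bodies. The paper's proof is literally the one-line ``This follows from Lemma~\ref{lem:volume} together with the Alexandrov--Fenchel inequality for convex bodies,'' so there is nothing to add.
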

\begin{proof}
This follows from Lemma \ref{lem:volume} together with the Alexandrov--Fenchel inequality for convex bodies \cite[Theorem 7.3.1]{BM}
\end{proof}
The following is a Hodge index theorem for $\mathcal{C}$ in the case $n=2$. 
\begin{cor}
Let $n = 2$ and let $K$ be a compact convex body in $\mathcal{K}$. Set $k = \log([K]) \in \mathcal{C}$ and suppose that $\deg(k^2) > 0$. Let $L$ be any other compact convex set in $\mathcal{K}$ such that $\deg(\ell \cdot k) = 0$, where $\ell = \log([L]) \in \mathcal{C}$. Then we have that $\deg(\ell^2) = 0$.

\end{cor}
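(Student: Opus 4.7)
The plan is to deduce this directly from the Alexandrov--Fenchel inequality of Theorem \ref{th:alexandrov} combined with the non-negativity of self-intersection degrees, which follows from Lemma \ref{lem:volume}.

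First, I would apply Theorem \ref{th:alexandrov} in the case $n = 2$, which yields
\[
\deg(k \cdot \ell)^2 \geq \deg(k^2)\, \deg(\ell^2).
\]
By the hypothesis $\deg(\ell \cdot k) = 0$, the left-hand side vanishes, so $\deg(k^2)\,\deg(\ell^2) \leq 0$. Since $\deg(k^2) > 0$ by hypothesis, this forces the inequality $\deg(\ell^2) \leq 0$.

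The second (and only other) step is to observe that $\deg(\ell^2)$ is always non-negative. Indeed, by Lemma \ref{lem:volume} applied with $n = 2$ and $\D_1 = \D_2 = \D_L$ the nef toric b-divisor attached to $L$ via Remark \ref{rem:convex-b}, we have
\[
\deg(\ell^2) = \MV(L, L) = 2\,\vol(L) \geq 0,
\]
since $L$ is a compact convex set. Combining this with the bound $\deg(\ell^2) \leq 0$ from the previous step gives $\deg(\ell^2) = 0$, as claimed.

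There is essentially no obstacle here: the argument is the standard deduction of a Hodge-type index theorem from Alexandrov--Fenchel, and the only point to check is that self-degrees of generators $\log([L])$ are non-negative, which is immediate from the volume interpretation in Lemma \ref{lem:volume}. The work in this paper has been in establishing that the degree pairing on $\mathcal{C}$ agrees with mixed volumes, at which point the classical convex-geometric Alexandrov--Fenchel inequality transfers verbatim.
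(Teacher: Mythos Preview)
Your proof is correct and follows essentially the same route as the paper: both apply the Alexandrov--Fenchel inequality from Theorem~\ref{th:alexandrov} together with the non-negativity of $\deg(\ell^2)$ coming from the volume interpretation. The paper phrases the argument as a proof by contradiction and asserts $\deg(\ell^2)\geq 0$ without citation, whereas you argue directly and explicitly invoke Lemma~\ref{lem:volume}; these are only cosmetic differences.
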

\begin{proof}
We know that $\deg(\ell ^2) \geq 0$. Sppose that $\deg(\ell^2) > 0$. 
Then, by Theorem~\ref{th:alexandrov} we have that 
\[
0 = \deg(\ell \cdot k)^2 \geq \deg(\ell^2)\deg(k^2) > 0,
\]
a contradiction.  
\end{proof}
The next is a generalized inequality of Hodge type for $\mathcal{C}$.
\begin{theorem}\label{th:gen-hodge}
Let $1 \leq p \leq n$ and let $K_1, \dotsc, K_p, L_1, \dotsc, L_{n-p}$ be compact convex sets in $\mathcal{K}$. Set $k_i = \log([K_i]) \in \mathcal{C}$ for $i = 1, \dotsc, p$ and $\ell_i = \log([L_i]) \in \mathcal{C}$ for $i = 1, \dotsc, n-p$. Then, the following inquality holds true. 
\begin{align}\label{eqn:ineq0}
\deg\left(k_1 \dotsm k_p \cdot \ell_1 \dotsm \ell_{n-p}\right)^p \geq \deg\left(\left(k_1\right)^p \cdot \ell_1\dotsm \ell_{n-p}\right) \dotsm \deg\left(\left(k_p\right)^p\cdot \ell_1\dotsm \ell_{n-p}\right).
\end{align}
In particular, for $p = n$, we get 
\[
\deg\left(k_1 \dotsm k_n\right)^n \geq \deg\left(k_1^n\right)  \dotsm \deg\left(k_n^n\right).
\]
\end{theorem}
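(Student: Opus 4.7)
The plan is to reduce \eqref{eqn:ineq0} to the generalized Alexandrov--Fenchel inequality for mixed volumes of convex bodies, proceeding exactly as in the proof of Theorem~\ref{th:alexandrov}.

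First, I will apply Lemma~\ref{lem:volume} to each mixed product appearing in \eqref{eqn:ineq0}. Since each $k_i = \log([K_i])$ maps under $\iota$ to the nef toric b-divisor $\D_{K_i}$ corresponding to $K_i$, and similarly each $\ell_j$ to $\D_{L_j}$, Lemma~\ref{lem:volume} turns the left-hand side of \eqref{eqn:ineq0} into $\MV(K_1, \ldots, K_p, L_1, \ldots, L_{n-p})^p$, and each factor on the right-hand side into $\MV(\underbrace{K_i, \ldots, K_i}_{p \text{ copies}}, L_1, \ldots, L_{n-p})$. Consequently, \eqref{eqn:ineq0} becomes equivalent to
\[
\MV(K_1, \ldots, K_p, L_1, \ldots, L_{n-p})^p \;\geq\; \prod_{i=1}^p \MV(\underbrace{K_i, \ldots, K_i}_{p}, L_1, \ldots, L_{n-p}).
\]

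Second, I will invoke the generalized Alexandrov--Fenchel inequality for mixed volumes, a classical result in Brunn--Minkowski theory (see, e.g., \cite[Section 7.4]{BM}). This is known to follow from the ordinary Alexandrov--Fenchel inequality (already cited in Theorem~\ref{th:alexandrov}) by induction on $p$: the base case $p = 2$ is AF itself, and for $p \geq 3$ one iterates AF to progressively concentrate each of the bodies $K_1, \ldots, K_p$ into its own term. The special case $p = n$ stated at the end of the theorem is the same assertion with no auxiliary bodies $L_j$ present.

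The main obstacle would be proving the generalized Alexandrov--Fenchel inequality from scratch, since its induction step requires nontrivial combinatorial bookkeeping to track multiplicities across iterated applications of AF. However, because the inequality is a well-established theorem of convex geometry, it suffices here simply to cite it; beyond the translation via Lemma~\ref{lem:volume}, no additional convex-geometric input is needed over what is already invoked in Theorem~\ref{th:alexandrov}.
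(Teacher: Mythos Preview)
Your proposal is correct, and in spirit it mirrors exactly what the paper does for Theorem~\ref{th:alexandrov}: translate via Lemma~\ref{lem:volume} and invoke the relevant inequality from Brunn--Minkowski theory. The paper, however, takes a different route for Theorem~\ref{th:gen-hodge}: rather than citing the generalized Alexandrov--Fenchel inequality as a black box, it reproves it by induction on $p$ inside the b-divisor framework. The induction step first establishes the $(p-1)$-variable version \eqref{eqn:ineq2} by fixing a fan $\Sigma$, restricting all the b-divisors to the incarnation $D_{H,\Sigma}$ of one of them, applying the induction hypothesis on that codimension-one subvariety, and passing to the limit over $\Sigma$; it then carries out explicitly the ``nontrivial combinatorial bookkeeping'' you allude to (products over indices, cancellation, root extraction) to pass from \eqref{eqn:ineq2} to \eqref{eqn:ineq0}. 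Your approach is shorter and entirely legitimate given the citations already in play; the paper's approach is more self-contained and, by working through the restriction-and-limit argument, illustrates how one might hope to propagate such inequalities in the b-divisor setting even when a direct convex-geometric translation is unavailable.
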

\begin{proof}
We proceed the proof by induction on $p$. 

For $p=1$, the claim is clear. For $p=2$, the result follows from Theorem~\ref{th:alexandrov}. Now, assume that $p \geq 3$ and suppose the claim is true for $p-1$. We will show the result for $p$. 

Let $A_1, \dotsc, A_{p-1}, H$ be arbitrary compact convex sets in $\mathcal{K}$. Set $a_i = \log([A_i])$ for $i = 1, \dotsc p-1$ and $h = \log([H])$. We claim that
\begin{align}\label{eqn:ineq2}
\deg\left(a_1 \dotsm a_{p-1} \cdot h  \cdot \ell_1 \dotsm \ell_{n-p}\right)^{p-1} \geq \prod_{i =1}^{p-1}\deg\left(a_i^{p-1}\cdot h \cdot \ell_1 \dotsm \ell_{n-p}\right).
\end{align}
In order to see this, fix a fan $\Sigma \in W'_{\sm}$. We may consider the nef toric b-divisor $\pmb{D}_H$ corresponding to $H$, and its incarnation $D_{\Sigma}=D_{H,\Sigma}$ on $X_{\Sigma}$. This is a (not necessarily nef) toric divisor on $X_{\Sigma}$. Note that $D_{\Sigma}$ is a toric subvarity of codimension one. Using the moving lemma on $X_{\Sigma}$, we may suppose that the toric b-divisors $\D_{A_i}$ for $i = 1, \dotsc, p-1$ and $\D_{L_i}$ for $i = 1, \dotsc, n-p$, corresponding to the convex sets $A_i$ and $L_i$, respectively, restrict to $D_{\Sigma}$. We write $\D_{A_i}|_{\Sigma}$ and $\D_{L_i}|_{\Sigma}$ for this restriction. Then, using Lemma \ref{lem:volume} and the induction hypothesis, we get that the inequality 
\begin{align}\label{eq1}
\left(\D_{A_1} \dotsm \D_{A_{p-1}} \cdot D_{\Sigma} \cdot \D_{L_1} \dotsm \D_{L_{n-p}}\right)^{p-1} \geq \prod_{i =1}^{p-1}\left(\left( \D_{A_i} \right)^{p-1} \cdot D_{\Sigma} \cdot \D_{L_1} \dotsm \D_{L_{n-p}}\right), 
\end{align}
which is equivalent to 
\[
\left(\D_{A_1}|_{\Sigma} \dotsm \D_{A_{p-1}}|_{\Sigma} \cdot \D_{L_1}|_{\Sigma} \dotsm \D_{L_{n-p}}|_{\Sigma}\right)^{p-1} \geq \prod_{i =1}^{p-1}\left(\left( \D_{A_i}|_{\Sigma} \right)^{p-1} \cdot \D_{L_1}|_{\Sigma} \dotsm \D_{L_{n-p}}|_{\Sigma}\right),
\]
is true. The same argument works for any $\Sigma \in W'_{\sm}$. Hence, taking limits in \eqref{eq1}, we obtain 
\begin{align*}
\left(\D_{A_1} \dotsm \D_{A_{p-1}} \cdot \D_{H} \cdot \D_{L_1} \dotsm \D_{L_{n-p}}\right)^{p-1} \geq \prod_{i =1}^{p-1}\left(\left( \D_{A_i} \right)^{p-1} \cdot \D_H \cdot \D_{L_1} \dotsm \D_{L_{n-p}}\right),
\end{align*}
which, again using Lemma \ref{lem:volume}, is equivalent to \eqref{eqn:ineq2}.

Now we see that \eqref{eqn:ineq0} follows from \eqref{eqn:ineq2}. Fix some index $s \in \{1, \dotsc, p\}$ and apply \eqref{eqn:ineq2} with $H = K_s$ and $A_1, \dotsc, A_{p-1}$ the remaining $K_i$'s. We get 
\[
\deg\left(k_1 \dotsm k_p \cdot \ell_1 \dotsm \ell_{n-p}\right)^{p-1} \geq \prod_{i \neq s} \deg\left(k_i^{p-1} \cdot k_s \cdot \ell_1 \dotsm \ell_{n-p}\right).
\]
Taking the product over $s$ yields 
\begin{align}\label{eqn:ineq3}
\deg\left(k_1 \dotsm k_p \cdot \ell_1 \dotsm \ell_{n-p}\right)^{p(p-1)} \geq \prod_s\prod_{i \neq s} \deg\left(k_i^{p-1} \cdot k_s \cdot \ell_1 \dotsm \ell_{n-p}\right).
\end{align}
On the other hand, applying \eqref{eqn:ineq2} with $H=A_1= \dotsc = A_{p-2} = K_i$ and $A_{p-1} = K_s$, we obtain 
\[
\deg\left(k_i^{p-1} \cdot k_s \cdot \ell_1 \dotsm \ell_{n-p}\right)^{p-1} \geq 
\deg\left(k_i^p \cdot \ell_1 \dotsm \ell_{n-p}\right)^{p-2}\deg\left(k_s^{p-1} \cdot k_i \cdot \ell_1 \dotsm \ell_{n-p}\right).
\]
Therefore, 
\begin{align*}
& \prod_s \prod_{i \neq s} \deg\left(k_i^{p-1} \cdot k_s \cdot \ell_1 \dotsm \ell_{n-p}\right)^{p-1} \\
& \geq \prod_s \prod_{i \neq s} \deg\left(k_i^p \cdot \ell_1 \dotsm \ell_{n-p}\right)^{p-2}\deg\left(k_s^{p-1}\cdot k_i \cdot \ell_1 \dotsm \ell_{n-p}\right) \\
&= \left(\prod_i\deg\left(k_i^p \cdot \ell_1 \dotsm \ell_{n-p}\right)^{(p-1)(p-2)}\right)\left(\prod_s\prod_{i \neq s} \deg\left(k_i^{p-1}\cdot k_s \cdot \ell_1 \dotsm \ell_{n-p}\right)\right).
\end{align*}
The second term on the right cancels against the left-hand side, and taking $(p-2)$'th roots we obtain
\begin{align}\label{eqn:ineq4}
\prod_s\prod_{i \neq s} \deg\left(k_i^{p-1} \cdot k_s \cdot \ell_1 \dotsm \ell_{n-p}\right) \geq \prod_s\deg\left(k_s^p \cdot \ell_1 \dotsm \ell_{n-p}\right)^{p-1}.
\end{align}
Plugging \eqref{eqn:ineq4} into \eqref{eqn:ineq3} we get
\[
\deg\left(k_1 \dotsm k_p \cdot \ell_1 \dotsm \ell_{n-p}\right)^{p(p-1)} \geq \prod_s\deg\left(k_s^p \cdot \ell_1 \dots \ell_{n-p}\right)^{p-1}.
\]
Finally, taking $(p-1)$'th roots we obtain the result. 

\end{proof}

We have the following corollary.
\begin{cor} 
Let $K$ and $L$ be two compact convex sets in $\mathcal{K}$. Set $k = \log([K]) \in \mathcal{C}$ and $\ell = \log([L]) \in \mathcal{C}$. Then the following inequalities are satisfied. 
\begin{enumerate}
\item\label{it1} For any integers $1\leq q \leq p \leq n$ 
\[
\deg\left(k^q \cdot \ell^{n-q}\right)^p \geq \deg\left(k^p \cdot \ell^{n-p} \right)^q \deg\left(\ell^n\right)^{p-q}
\]
\item\label{it2} For any $1 \leq i \leq n$ 
\[
\deg\left(k^i\cdot \ell^{n-i}\right)^n \geq \deg\left(k^n\right)^i \deg\left(\ell^n\right)^{n-i}.
\]
\item\label{it3} 
\[
\deg\left(\left(k+\ell\right)^n\right)^{\frac{1}{n}} \geq \deg\left(k^n\right)^{\frac{1}{n}} + \deg\left(\ell^n\right)^{\frac{1}{n}}.
\]
\end{enumerate}
\end{cor}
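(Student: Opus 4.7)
The plan is to deduce all three inequalities from Theorem~\ref{th:gen-hodge} by specialization, using Lemma~\ref{lem:volume} only to ensure that the relevant degrees are non-negative (so that the fractional roots appearing in the argument are legitimate). No genuinely new input is required beyond these two results; the three parts are essentially combinatorial reshufflings of \eqref{eqn:ineq0}.

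For~\eqref{it1}, I would apply Theorem~\ref{th:gen-hodge} with the specialization $k_1 = \cdots = k_q = k$, $k_{q+1} = \cdots = k_p = \ell$, and $\ell_1 = \cdots = \ell_{n-p} = \ell$. The left-hand side of \eqref{eqn:ineq0} then collapses to $\deg(k^q \cdot \ell^{n-q})^p$, while on the right-hand side exactly $q$ of the $p$ factors equal $\deg(k^p \cdot \ell^{n-p})$ (those corresponding to the indices with $k_i = k$) and the remaining $p-q$ factors equal $\deg(\ell^p \cdot \ell^{n-p}) = \deg(\ell^n)$, yielding precisely \eqref{it1}. Part~\eqref{it2} is then the special case $p = n$, $q = i$ of \eqref{it1}, since $\deg(k^p \cdot \ell^{n-p})$ reduces to $\deg(k^n)$ and the exponent $p - q$ becomes $n - i$.

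For part~\eqref{it3}, the plan is to expand $(k+\ell)^n$ in the commutative graded $\Q$-algebra $\mathcal{C}$ and apply \eqref{it2} term-wise. Since $\deg$ is $\Q$-linear on $\mathcal{C}_n$ (being the restriction of $\iota$), the binomial expansion gives
\[
\deg\bigl((k+\ell)^n\bigr) \;=\; \sum_{i=0}^n \binom{n}{i}\, \deg\bigl(k^i\cdot \ell^{n-i}\bigr).
\]
By Lemma~\ref{lem:volume}, each mixed degree $\deg(k^i\cdot \ell^{n-i})$ equals the mixed volume $\MV(K,\ldots,K,L,\ldots,L)$ (with $i$ copies of $K$ and $n-i$ copies of $L$) and is therefore non-negative; this makes it legitimate to raise the inequality of \eqref{it2} to the power $1/n$ and substitute $\deg(k^i\cdot \ell^{n-i}) \geq \deg(k^n)^{i/n}\deg(\ell^n)^{(n-i)/n}$ into the sum above. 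The resulting lower bound is the binomial expansion of $\bigl(\deg(k^n)^{1/n} + \deg(\ell^n)^{1/n}\bigr)^n$, and taking $n$-th roots delivers \eqref{it3}. There is no substantive obstacle here: everything reduces to Theorem~\ref{th:gen-hodge} together with the non-negativity of mixed volumes provided by Lemma~\ref{lem:volume}, and the only care needed is the book-keeping of exponents in the specialization for \eqref{it1}.
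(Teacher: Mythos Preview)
Your proposal is correct and follows essentially the same approach as the paper: the same specialization in Theorem~\ref{th:gen-hodge} for \eqref{it1}, the same reduction of \eqref{it2} to \eqref{it1}, and the same binomial-expansion argument for \eqref{it3}. Your explicit appeal to Lemma~\ref{lem:volume} for non-negativity (needed to take $n$-th roots) is a detail the paper leaves implicit, but otherwise the arguments coincide.
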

\begin{proof}
For \eqref{it1}, take $K_1 = \dotsc = A_q = K$ and $K_{q+1} = \dotsc = K_p = L_1 = \dotsc = L_{n-p} = L$ in Theorem \ref{th:gen-hodge}.

\eqref{it2} is just \eqref{it1} with $q=i$ and $p =n$. 

For \eqref{it3}, expand $\left(k + \ell \right)^n$, apply \eqref{it2} and take $n$'th rooths. 
\end{proof}
\begin{exa}
Let $K_1, K_2$ be compact convex sets in $\mathcal{K}$ and set $k_i = \log([K_i]) \in \mathcal{C}$ for $i = 1,2$. Define the sequence of numbers
\[
b_j \coloneqq \log\left(\deg\left(k_1^j k_2^{n-j}\right)\right).
\]
for $ j = 0, \dotsc, n$. 
Then
\[
b_{j-1} + b_{j+1} \leq 2b_j,
\]
and thus, the sequence $\left(b_j\right)_{j = 0} ^n$ is concave.
\end{exa}

\printbibliography

\noindent Ana María Botero\\
Institut für Mathematik\\
Universität Regensburg\\
Universitätsstr. 31\\
93053 Regensburg\\
Germany\\
e-mail: \url{ana.botero@mathematik.uni-regensburg.de}
\end{document}